
\documentclass[oneside,10pt]{amsart}
\usepackage{amsmath}
\usepackage{amssymb}
\usepackage{amsthm}
\usepackage{float}
\usepackage{ulem}
\usepackage{xcolor}
\usepackage[utf8x]{inputenc}
\usepackage{comment}
\usepackage{framed}

\newcommand{\eps}{\varepsilon}


\DeclareMathAlphabet{\mathbbb}{U}{bbold}{m}{n}


\newtheorem{thm}{Theorem}[section]
\newtheorem{cor}[thm]{Corollary}
\newtheorem{lem}[thm]{Lemma}

\newtheorem{prop}[thm]{Proposition}
\theoremstyle{definition}
\newtheorem{defn}[thm]{Definition}
\theoremstyle{remark}
\newtheorem{rem}[thm]{Remark}
\newtheorem{ex}[thm]{Example}
\newtheorem{conj}[thm]{Open Question}
\numberwithin{equation}{section}

\usepackage{graphicx}


\usepackage{enumerate}



\newcommand{\R}{{\mathbb R}}


\newcommand{\matrice}{\begin{pmatrix}}
\newcommand{\ok}{\end{pmatrix}}

\newcommand{\Om}{\Omega}

\newcommand{\M}{\color{blue}}

\begin{document}
\title[]{On the critical points of semi-stable solutions on convex domains of  Riemannian surfaces}
\thanks{The first author acknowledges support of INdAM-GNAMPA. The second author acknowledges support of the INDAM-GNSAGA project ``Analisi Geometrica: Equazioni alle Derivate Parziali e Teoria delle Sottovarietà''.}
\author[Grossi]{Massimo Grossi }
\address{Dipartimento di Scienze di Base e Applicate per l'Ingegneria, Universit\`a di Roma ``La Sapienza'', Via Scarpa 12 - 00161 Roma, Italy, e-mail: {\sf massimo.grossi@uniroma1.it}.}
\author[Provenzano]{Luigi Provenzano}
\address{Dipartimento di Scienze di Base e Applicate per l'Ingegneria, Universit\`a di Roma ``La Sapienza'', Via Scarpa 12 - 00161 Roma, Italy, e-mail: {\sf luigi.provenzano@uniroma1.it}.}

\begin{abstract}
In this paper we consider semilinear equations $-\Delta u=f(u)$ with Dirichlet boundary conditions on certain convex domains of the two dimensional model spaces of constant curvature. We prove that a positive, semi-stable solution $u$ has exactly one non-degenerate critical point (a maximum). The proof consists in relating the critical points of the solution with the critical points of a suitable auxiliary function, jointly with a topological degree argument.
\end{abstract}

\keywords{Semilinear elliptic equations, critical points, convex domain, model spaces, Poincaré-Hopf Theorem}
\subjclass{58J32, 58J61, 58J20, 58J05}


\maketitle
\section{Introduction and statement of the main result}

Let $M$ be a two-dimensional model space, which means $M=\mathbb S^2,\mathbb R^2$ or $\mathbb H^2$ with the corresponding standard metrics of constant curvature $1,0,-1$. Let $\Omega\subset M$ be a bounded and smooth domain and let $u$ be a solution of the following Dirichlet problem
\begin{equation}\label{DL}
\begin{cases}
-\Delta u=f(u) &\hbox{in }\Om\,,\\
u=0 &\hbox{on }\partial\Om.
\end{cases}
\end{equation}
Here $f:[0,+\infty)\to\R$ is a $C^1$ nonlinearity satisfying $f(0)\ge0$. We will consider {\it positive} solutions of \eqref{DL} which are {\it semi-stable}. We say that $u$ is semi-stable if the first eigenvalue of the {\it stability operator}
\begin{equation}\label{s-s}
\mathcal{L}=-\Delta -f'(u)
\end{equation}
is non-negative. 

Two classical cases which fit with in this class are
\begin{itemize}
\item The {\bf torsion problem}:
\begin{equation}\label{f1}
f(s)=1
\end{equation}
\item The {\bf eigenvalue problem}, in particular, the {\bf first eigenfunction}:
\begin{equation}\label{f2}
f(s)=\lambda s
\end{equation}
where $\lambda$ is the first Dirichlet eigenvalue on $\Omega$.
\end{itemize}
There is a huge literature about the shape of solutions to \eqref{DL}. Indeed, the description of the geometry of the shape of the solutions, like the convexity of the super-level sets and the number of the critical points, is a problem that has engaged many mathematicians in the past decades.  On the other hand many questions are unsolved and a satisfactory description is available only in the $ flat$ case $\Omega\subset\R^2$ for $f$ as in \eqref{f1}, \eqref{f2}.
\subsection{The flat case $\Omega\subset\mathbb R^2$}
It is known that the shape of $\Omega$ influences the number of critical points of $u$. In the case of the torsion problem there are some conclusive answers. In \cite{ml}, Makar-Limanov proved that if $\Om$ is convex then the level sets of $u$ have positive curvature and the solution has only one critical point. The convexity assumption is difficult to relax: indeed in \cite{gg4} the authors show examples of domains ``close'' (in a suitable sense) to a convex one with a large numbers of critical points. The result of Makar-Limanov has been extended to any dimension by Korevaar and Lewis \cite{KoLe}.

Concerning the first eigenfunction of the Laplacian, among the first works on the subject we mention \cite{App} and \cite{bl}, where it is proved that if $\Omega$ is a strictly convex domain in $\mathbb R^2$, then the first eigenfunction is $\log$-concave. Some additional work is needed (see e.g., \cite{cf2}) to derive also the uniqueness of the critical point.  We point out that in \cite{bl}, log-concavity  of the first eigenfunction is proved in any dimension.

Next we mention two seminal papers for the case of a general nonlinearity: the first one is the celebrated paper by Gidas, Ni and Nirenberg \cite{gnn}, where it is proved, among other results, that there is uniqueness and non-degeneracy of the critical point
under the assumption that $\Om$ is symmetric with respect to a point and just convex in any direction. The second relevant result, which motivated our study, 
is \cite[Theorem 1]{cc} where the authors consider semi-stable solutions to semilinear elliptic equations and prove that for convex planar domains with boundary of positive curvature such solutions have exactly one non-degenerate critical point. The hypothesis of positive boundary curvature has been relaxed later in \cite[Theorem 2]{dgm}. We also mention \cite{dg} where the authors consider the second Dirichlet eigenfunction on certain planar convex sets.  
\subsection{The case of Riemannian surfaces}
In the Riemannian setting, much attention has been devoted to the case of the eigenfunctions. There is a quite vast literature on the properties of the nodal sets of eigenfunctions, mainly concerning their size (see e.g., \cite{df} and the review \cite{nty}). Much less is known about the number of critical points, and even more so  for a general nonlinearity $f$. An approach that allowed to get important results is to look for a metric $g$ on a manifold $M$ such that the corresponding $k$-th eigenfunction has a prescribed number of critical points.

For example, in \cite{eps} it is proved that, given a $n$-dimensional compact manifold $M$ with $n\geq 3$, there exits a metric $g$ such that for all positive integers $N$ and $l$, the $k$-th eigenfunction of the Laplacian has at least $N$ non-degenerate critical points, for  $k=1,…,l$. Similar results have been obtained in \cite{bls,epss,ms}.

A different point of view is to consider some {\it fixed} ambient two-dimensional manifold (e.g., $\mathbb S^2$ or $\mathbb H^2$ with their standard metrics) and study the convexity of the level sets of the first Dirichlet eigenfunction on  geodesically convex domains. In \cite{swyy}  the authors discuss some $\log$-concavity estimates for the first eigenfunction on convex Euclidean domains, providing a short proof of the result of Brascamp and Lieb \cite{bl}. This proof has been adapted in \cite{lw} to prove the the $\log$-concavity of the first Dirichlet eigenfunction in geodesically convex domains of $\mathbb S^n$. Note that this last result, jointly with \cite{w}, allows to prove the {\it uniqueness and non-degeneracy} of the critical point. For related results on positively curved surfaces, we also mention \cite{khan}.

Another problem which has been investigated  concerns the convexity of the level sets of eigenfunctions in the hyperbolic space $\mathbb H^2$. Here the situation is considerably different, Indeed, unlike the results in $\mathbb S^2$ or $\R^2$, there exist geodesically convex domains for which the first Dirichlet eigenfunction has non-convex level sets, see \cite{s}. Using similar ideas, in \cite{bcnsww} the author constructs an example of a convex domain such that the corresponding first Dirichlet eigenfunction has {\it two distinct} maxima. Hence the convexity of the domain is not enough to guarantee the uniqueness of the critical point of the eigenfunction in the negatively curved case.
\subsection{Statement of the main result}
The aim of this paper is to prove  the uniqueness of the critical point of solutions to \eqref{DL}  for certain natural classes of convex domains in $\mathbb S^2$ and $\mathbb H^2$. Our approach is suitable also to cover some other ambient spaces (see Remark \ref{other}), however, for the sake of presentation, we confine ourselves to the mentioned cases.  Moreover, our method provides an alternative simple proof of the uniqueness of the critical points for convex planar domains with positive boundary curvature, i.e., the result of \cite{cc} (see also \cite{dgm} for another alternative proof). Therefore we state it for the three model spaces.

Our main result is stated as follows:
\begin{thm}\label{main}Let $M$ be $\mathbb S^2,\mathbb R^2$ or $\mathbb H^2$ with the standard metric of constant curvature $1,0,-1$, and let $\Omega\subset M$ be a bounded and smooth domain. Assume that
\begin{enumerate}[i)]
\item $\Omega$ is convex with boundary of positive curvature if $M=\mathbb R^2$;
\item $\Omega$ is convex with boundary of positive curvature and diameter smaller than $\frac{\pi}{2}$ if $M=\mathbb S^2$;
\item $\Omega$ is horoconvex if $M=\mathbb H^2$ (the notion of horoconvexity is recalled in Definition \ref{horoconvex}). 
\end{enumerate}
Then any positive, semi-stable solution $u$ to \eqref{DL} has a unique non-degenerate critical point, which is a maximum.
\end{thm}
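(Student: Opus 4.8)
The plan is to count critical points via the Poincaré–Hopf theorem, exploiting that a convex domain in a model space is a topological disk, so $\chi(\Omega)=1$, once it is shown that every critical point of $u$ is a non-degenerate extremum. First I would record the reductions. Since $u>0$ in $\Om$, $u=0$ on $\partial\Om$ and $u$ solves \eqref{DL}, the Hopf boundary lemma gives $\partial_\nu u<0$ on $\partial\Om$; hence $\nabla u\neq 0$ on $\partial\Om$ and $\nabla u$ points into $\Om$, so all critical points are interior and $\nabla u$ is admissible for Poincaré–Hopf (the boundary being a single circle, $\chi(\partial\Om)=0$, so the inward–pointing correction is immaterial). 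Granting, as shown below, that the critical points are isolated with index $+1$, the identity $\sum_{p}\mathrm{ind}_p(\nabla u)=\chi(\Om)=1$ forces a single critical point, which, being the unique interior critical point of a positive function vanishing on $\partial\Om$, is the global maximum.

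The analytic engine is the linearized equation produced by the isometries of $M$. Each model space carries a three–dimensional algebra of Killing fields, and for any Killing field $X$ the flow of $X$ is by isometries and hence commutes with $\Delta$, so $u_X:=\langle\nabla u,X\rangle$ satisfies $\Delta u_X=X(\Delta u)=-f'(u)\,u_X$, i.e. $\mathcal L u_X=0$. (If $u_X\equiv 0$ for some $X$, then $\Om$ and $u$ are invariant under a one–parameter isometry group, which in a model space forces a geodesic disk and a radial solution with a single critical point, a case handled directly; so I may assume $u_X\not\equiv 0$.) Semi–stability means $\lambda_1(\mathcal L,\Om)\ge 0$, and strict domain monotonicity of the first Dirichlet eigenvalue then rules out any nodal domain of $u_X$ compactly contained in $\Om$: such a domain $\omega$ would carry the positive solution $u_X$ of $\mathcal L\varphi=0$ vanishing on $\partial\omega$, whence $\lambda_1(\mathcal L,\omega)=0$, contradicting $\lambda_1(\mathcal L,\omega)>\lambda_1(\mathcal L,\Om)\ge 0$. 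Therefore every component of $\{u_X=0\}$ reaches $\partial\Om$, and every critical point of $u$ lies on the nodal set of $u_X$ for \emph{every} $X$. By unique continuation the $u_X$ vanish to finite order, so the common zero set is discrete and the critical points are isolated.

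Next I would pin down the boundary nodal picture, which is precisely where the three convexity hypotheses enter. On $\partial\Om$ one has $u_X=(\partial_\nu u)\,\langle\nu,X\rangle$, so the boundary zeros of $u_X$ are exactly the points where the flow line of $X$ is tangent to $\partial\Om$. Conditions i)–iii) are tailored so that, for the relevant Killing fields, this tangency occurs at exactly two points, transversally: strict convexity for translations when $M=\R^2$; convexity with positive boundary curvature and diameter $<\tfrac{\pi}{2}$ to control the rotational fields when $M=\mathbb S^2$; and horoconvexity when $M=\mathbb H^2$, where plain geodesic convexity is known to fail — this is the obstruction behind the two–maxima examples in the negatively curved case. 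Combined with the absence of interior nodal domains, this makes $\{u_X=0\}$ a single embedded arc splitting $\Om$ into exactly two nodal domains, whose endpoints move monotonically around $\partial\Om$ as the direction of $X$ varies.

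Finally comes the index computation, the heart of the matter and the step I expect to be the main obstacle. At an isolated critical point $p$ the index of $\nabla u$ equals $\mathrm{sign}\det\nabla^2 u(p)$, and since to first order $u_X$ is the linear form $y\mapsto\langle\nabla^2 u(p)\,y,\,X(p)\rangle$, the local nodal line of $u_X$ rotates in the same sense as $X$ when $\det\nabla^2 u(p)>0$ and in the opposite sense when $\det\nabla^2 u(p)<0$. I would argue that a saddle, or a degenerate point, is incompatible with the global nodal picture of the previous paragraph: if the local nodal direction at $p$ rotated opposite to, or with higher multiplicity than, the convexity–forced monotone motion of the two boundary endpoints, then for some direction of $X$ the nodal arc would be forced either to close up inside $\Om$ or to spawn a second interior nodal domain, contradicting the no–interior–nodal–domain property (and a degenerate critical point would likewise produce a higher–order, multi–arc zero of some $u_X$). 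The auxiliary object here is exactly the direction–parametrized family $\{u_X\}$, whose nodal arcs relate the critical points of $u$ to a computable degree. With every index equal to $+1$, Poincaré–Hopf yields a single critical point, necessarily the non-degenerate maximum. The delicate point — and the reason the three ambient cases must be treated separately — is securing the tangency count and this orientation–matching uniformly; in $\mathbb H^2$ this is subtle and is precisely what horoconvexity is there to guarantee.
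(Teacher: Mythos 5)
Your proposal assembles most of the paper's ingredients correctly: the derivatives $u_X=\langle\nabla u,X\rangle$ along Killing fields solve the linearized equation $\mathcal L u_X=0$; semi-stability plus strict domain monotonicity of the first eigenvalue of $\mathcal L$ forbids nodal domains of $u_X$ compactly contained in $\Omega$; the hypotheses $i)$--$iii)$ are exactly what guarantees that $u_X$ has precisely two zeros on $\partial\Omega$ (the tangency points of the flow of $X$, cf. Proposition \ref{geom}); and Poincar\'e--Hopf with $\chi(\Omega)=1$ is the counting device. But there is a genuine gap at the step you yourself flag as ``the heart of the matter''. Because you apply Poincar\'e--Hopf to $\nabla u$, you must prove that \emph{every} critical point of $u$ has index $+1$, i.e.\ you must exclude non-degenerate saddles: these have index $-1$ and are perfectly compatible with the identity $\sum_p{\rm Ind}_p\nabla u=1$ (two maxima and one saddle give $1+1-1=1$). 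At a non-degenerate saddle $p$, however, one has $\nabla u_X(p)=D^2u(p)X(p)\ne 0$ for every $X$ with $X(p)\neq0$, so the nodal set of $u_X$ near $p$ is a \emph{single regular arc}: the ``interior loop / extra nodal domain'' contradiction, which does dispose of degenerate critical points, yields nothing here. Your substitute --- that the nodal direction at $p$ rotates against the motion of the two boundary tangency points, and that this mismatch forces, for some $X$, a closed nodal arc or an extra nodal domain --- is a heuristic, not an argument: one would have to control how nodal arcs reconnect as $X$ varies, rule out arcs avoiding $p$, etc. This is essentially the (nontrivial) flat-case analysis of \cite{cc}, and it is not supplied. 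Your isolation claim (``unique continuation, hence the common zero set of the $u_X$ is discrete'') is also not a proof; note that a curve of critical points would consist of degenerate ones, so isolation really comes from the degenerate-point argument, not from unique continuation.

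The paper avoids this obstacle entirely through an idea absent from your proposal: the auxiliary function $P=\tfrac12|\nabla u|^2+F(u)$ of \eqref{PF}. Using precisely your ingredients (the functions $u_X$, semi-stability, and the two-boundary-zero count) one proves that $\nabla P$ and $\nabla u$ have the same zeros (Proposition \ref{samezeros}) and that all critical points of $u$ are non-degenerate (Proposition \ref{nodeg}). The payoff of passing from $u$ to $P$ is algebraic: at a critical point, $D^2P=(D^2u-\Delta u\, I)D^2u$, hence $\det D^2P=(\det D^2u)^2>0$ (Lemma \ref{index_crit}), so every zero of $\nabla P$ has index $+1$ \emph{regardless of whether the underlying critical point of $u$ is an extremum or a saddle}. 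Poincar\'e--Hopf applied to $\nabla P$ --- admissible on the boundary because $\langle\nabla P,\nu\rangle=-\kappa|\nabla u|^2<0$, which is where the positive boundary curvature enters --- then counts the \emph{total} number of critical points of $u$ and finds it equal to one; saddles are never excluded directly, they are simply counted and found not to exist. If you wish to keep your route via $\nabla u$, you must replace the rotation heuristic by a genuine no-saddle proof; with the $P$-function, that step disappears.
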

\begin{rem}
From Theorem \ref{main} we immediately get the following two corollaries.
\begin{cor}\label{torsion}Let $M$ be $\mathbb S^2,\mathbb R^2$ or $\mathbb H^2$ with the standard metric of constant curvature $1,0,-1$, and let $\Omega\subset M$ be a bounded and smooth domain. Let $u$ be the solution to
$$
\begin{cases}
-\Delta u=1 & {\rm in\ \Omega},\\
u=0 & {\rm on\ }\partial\Omega.
\end{cases}
$$ 
Assume that $i)$, $ii)$ of $iii)$ of Theorem \ref{main} hold, respectively, when $M=\mathbb R^2,\mathbb S^2$ or $\mathbb H^2$. Then $u$  has a unique non-degenerate critical point, which is a maximum.
\end{cor}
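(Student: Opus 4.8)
The plan is to verify that the torsion solution falls within the scope of Theorem \ref{main}, so that the conclusion follows at once. Concretely, I would confirm three things: that the constant nonlinearity $f(s)=1$ meets the standing hypotheses (it is $C^1$ on $[0,+\infty)$ and satisfies $f(0)=1\ge0$), that the solution $u$ is positive, and that $u$ is semi-stable. The geometric assumption i), ii) or iii) is already granted by the statement of the corollary for the respective model space, so nothing further is needed on that front.

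For positivity I would invoke the maximum principle. Since $-\Delta u=1>0$ in $\Omega$, the function $u$ is superharmonic and hence attains its minimum on the boundary $\partial\Omega$, where $u=0$; the strong maximum principle (together with the Hopf lemma on the smooth boundary) then upgrades this to $u>0$ throughout $\Omega$. Thus $u$ is a positive solution, as required.

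For semi-stability the decisive observation is that $f'\equiv0$ for the torsion nonlinearity, so the stability operator \eqref{s-s} reduces to $\mathcal{L}=-\Delta$. Its first Dirichlet eigenvalue is the first eigenvalue $\lambda_1(\Omega)$ of the Laplacian, which is strictly positive on any bounded domain, and in particular non-negative. Hence the first eigenvalue of $\mathcal{L}$ is non-negative and $u$ is semi-stable in the sense defined in the introduction.

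With positivity and semi-stability established, Theorem \ref{main} applies directly and produces a unique non-degenerate critical point, which is a maximum. I do not anticipate any genuine obstacle: the substantive content lies entirely in Theorem \ref{main}, and the present argument merely certifies that the torsion problem satisfies its hypotheses. The only point deserving (elementary) care is the semi-stability, and this is automatic precisely because $f'$ vanishes identically, reducing $\mathcal{L}$ to the Dirichlet Laplacian.
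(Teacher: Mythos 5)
Your proposal is correct and matches the paper's reasoning: the corollary is stated there as an immediate consequence of Theorem \ref{main}, with the implicit verification being exactly what you spell out (positivity of $u$ by the maximum principle, and semi-stability because $f'\equiv 0$ reduces the stability operator $\mathcal{L}$ to $-\Delta$, whose first Dirichlet eigenvalue is positive). Your write-up simply makes explicit the checks the paper leaves to the reader.
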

\begin{cor}\label{eigenfunction}Let $M$ be $\mathbb S^2,\mathbb R^2$ or $\mathbb H^2$ with the standard metric of constant curvature $1,0,-1$, and let $\Omega\subset M$ be a bounded and smooth domain. Let $u$ be the first eigenfunction of
$$
\begin{cases}
-\Delta u=\lambda u & {\rm in\ \Omega},\\
u=0 & {\rm on\ }\partial\Omega.
\end{cases}
$$ 
Assume that $i)$, $ii)$ of $iii)$ of Theorem \ref{main} hold, respectively, when $M=\mathbb R^2,\mathbb S^2$ or $\mathbb H^2$. Then $u$  has a unique non-degenerate critical point, which is a maximum if $u$ is chosen positive.
\end{cor}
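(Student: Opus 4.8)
Since the two displayed nonlinearities both fall under Theorem~\ref{main}, Corollaries~\ref{torsion} and \ref{eigenfunction} are immediate: for $f\equiv1$ the operator in \eqref{s-s} is $\mathcal L=-\Delta$, whose first Dirichlet eigenvalue is positive, while for $f(s)=\lambda s$ one has $\mathcal L=-\Delta-\lambda$ with first eigenvalue $\lambda_1-\lambda=0$; in both cases the solution is positive (maximum principle, resp.\ by the choice of the first eigenfunction) and semi-stable. So the whole content is Theorem~\ref{main}, which I would prove as follows. First I would pass to a planar model: each $M$ is conformally flat, so via stereographic projection ($\mathbb S^2$) or the Poincar\'e disc ($\mathbb H^2$) I write $g=\rho^2\,\delta$ with $\rho>0$ on the image $\Omega_0\subset\mathbb R^2$ of $\Omega$, turning \eqref{DL} into $-\Delta_0 u=\rho^2 f(u)$ with $\Delta_0$ the flat Laplacian. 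The critical points of $u$ are exactly the zeros of the Euclidean gradient $\nabla_0 u$, which by the Hopf lemma is nonvanishing on $\partial\Omega_0$ and points strictly inward. Being homotopic through nonvanishing fields to the inner normal, $\nabla_0 u$ has winding number $1$ along $\partial\Omega_0$ (Umlaufsatz), so by the planar Poincar\'e--Hopf theorem the indices of its interior zeros sum to $\chi(\Omega_0)=1$. \emph{It therefore suffices to show that every critical point is a non-degenerate maximum}, for then each index equals $+1$ and the sum forces a single one.

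The auxiliary functions are the derivatives of $u$ along the Killing fields of $M$. Because \eqref{DL} is invariant under isometries, for every Killing field $X$ the function $w_X:=Xu$ solves the linearized problem $\mathcal L w_X=0$ in $\Omega$; for $M=\mathbb R^2$ these are the constant directional derivatives $\partial_e u$, and for $\mathbb S^2,\mathbb H^2$ the rotational/translational fields. On $\partial\Omega$ one has $w_X=(\partial_\nu u)\,\langle X,\nu\rangle$, and since $\partial_\nu u<0$ the sign of $w_X$ there is that of the inward normal component of $X$, vanishing precisely where $X$ is tangent to $\partial\Omega$. Now semi-stability enters through strict domain monotonicity of the first eigenvalue of $\mathcal L$: a nodal domain $\omega\Subset\Omega$ of $w_X$ would make $0$ the first Dirichlet eigenvalue of $\mathcal L$ on $\omega$, whence $\lambda_1(\mathcal L,\Omega)<\lambda_1(\mathcal L,\omega)=0$, contradicting $\lambda_1(\mathcal L,\Omega)\ge0$. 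Hence $w_X$ has \emph{no} interior nodal domain, every nodal line reaches $\partial\Omega$, and the number of nodal domains of $w_X$ equals the number of sign changes of $\langle X,\nu\rangle$ on $\partial\Omega$.

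The geometric heart is to prove that, for the relevant Killing fields, $\langle X,\nu\rangle$ changes sign \emph{exactly twice}, i.e.\ $X$ is tangent to $\partial\Omega$ at exactly two points. This is precisely what the convexity hypotheses deliver: positive geodesic curvature of $\partial\Omega$ in $\mathbb R^2$ and $\mathbb S^2$ (with the diameter bound $<\tfrac\pi2$ confining $\Omega$ to a hemisphere so that the rotational fields mimic flat directional derivatives), and horoconvexity in $\mathbb H^2$ (Definition~\ref{horoconvex}), which is the right substitute since in the negatively curved case geodesic convexity alone fails, as the examples of \cite{s,bcnsww} show. Given exactly two boundary sign changes and no interior nodal domain, the nodal set of each $w_X$ is a single unbranched arc, so its interior zeros are simple; were some interior zero of order $k\ge2$, the $2k$ emanating arcs would force $\ge4$ boundary sign changes. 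Consequently, at any critical point $x_0$ (where $w_X(x_0)=0$ for all $X$) the map $X(x_0)\mapsto \nabla(Xu)(x_0)=\mathrm{Hess}\,u(x_0)\,X(x_0)$ is injective, since the Killing fields realize all of $T_{x_0}M$; thus $\mathrm{Hess}\,u(x_0)$ is invertible and $x_0$ is non-degenerate.

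The remaining and most delicate step—which I expect to be the main obstacle—is to upgrade ``non-degenerate'' to ``maximum'', i.e.\ to exclude index $-1$ (saddle) points. One route is to intersect two of the arcs: for Killing fields $X_1,X_2$ independent at $x_0$ the critical set is $\gamma_{X_1}\cap\gamma_{X_2}$, each intersection transversal with crossing sign equal to $\operatorname{sign}\det\mathrm{Hess}\,u(x_0)$, i.e.\ to the Poincar\'e--Hopf index, so one must show all these signs are $+1$. Equivalently, I would rule out saddles by a Sternberg--Zumbrun--type stability inequality: inserting the test function $|\nabla u|\,\phi$ into the quadratic form of $\mathcal L$ and invoking the Bochner formula on $M$ produces an interior curvature term governed by the ambient Gauss curvature ($1,0,-1$) and a boundary term governed by the geodesic curvature of $\partial\Omega$, and the sign conditions in i)--iii) are exactly what make this inequality force the level sets to curve the right way near each critical point, so that each is a maximum. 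Combined with the index sum $=1$ from the first step, this yields a unique critical point, necessarily the maximum. I expect the hyperbolic case, where the curvature term has the unfavorable sign and horoconvexity must be used sharply, to be the crux; this is also consistent with the scope indicated in Remark~\ref{other}, where the argument is driven precisely by these curvature signs.
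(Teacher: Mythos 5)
Your first paragraph is exactly the paper's proof of Corollary \ref{eigenfunction}: for $f(s)=\lambda s$ with $\lambda$ the first Dirichlet eigenvalue, the stability operator \eqref{s-s} is $\mathcal L=-\Delta-\lambda$, whose first eigenvalue is $\lambda-\lambda=0$, so the first eigenfunction, chosen positive, is a positive semi-stable solution and Theorem \ref{main} applies. That reduction is complete and correct, and it is all the paper itself does for the corollary.

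The rest of your proposal --- your sketch of Theorem \ref{main} --- shares the paper's core machinery (derivatives $Xu$ along Killing fields solve the linearized equation; semi-stability forbids nodal domains compactly contained in $\Omega$; the convexity/horoconvexity hypotheses give exactly two boundary tangency points, which is Proposition \ref{geom}; and your injectivity argument for $\mathrm{Hess}\,u(x_0)$ is essentially Proposition \ref{nodeg}), but it breaks down at the decisive counting step, and you flag this yourself. Applying Poincar\'e--Hopf to $\nabla u$ only yields $\sum_i{\rm Ind}_{p_i}\nabla u=1$, where saddles contribute $-1$, so uniqueness requires excluding saddles; your two proposed routes (signs of transversal crossings of nodal arcs, a Sternberg--Zumbrun-type stability inequality) are left as conjectures, with the hyperbolic case explicitly unresolved. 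The idea you are missing is the paper's choice of vector field: Poincar\'e--Hopf is applied not to $\nabla u$ but to $\nabla P$ with $P=\frac12|\nabla u|^2+F(u)$ as in \eqref{PF}. The Killing-field/semi-stability argument is then used twice: once (Proposition \ref{samezeros}) to show that the zeros of $\nabla P$ are exactly the zeros of $\nabla u$, and once (Proposition \ref{nodeg}) for non-degeneracy. At any such zero one computes $D^2P=(D^2u-\Delta u\,I)D^2u$, hence $\det D^2P=(\det D^2u)^2>0$ (Lemma \ref{index_crit}): every zero of $\nabla P$ has index $+1$ \emph{regardless} of whether the critical point of $u$ is an extremum or a saddle, and the boundary hypothesis of Theorem \ref{PH} holds because $\langle\nabla P,\nu\rangle=-|\nabla u|^2\kappa<0$ by \eqref{bdry_curv}. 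So uniqueness follows with no need to classify critical points a priori, and the unique critical point of a positive function vanishing on $\partial\Omega$ is then automatically its maximum. In short: your proof of the corollary modulo Theorem \ref{main} is fine, but your route to Theorem \ref{main} has a genuine gap precisely where the paper's $P$-function device does the work.
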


We make some comments on the consequences of of Theorem \ref{main} and its corollaries.
\begin{itemize}
\item To our knowledge this is the first result in the literature on the uniqueness of the critical point for the {\it torsion problem} on manifolds. 
\item We remark that the additional hypotheses in $ii)$ and $iii)$ are crucial for our method to work, and just requiring $\kappa>0$ is not sufficient. In particular, they are crucial to prove Proposition \ref{geom}. Removing these hypotheses, we can easily find convex domains for which points $ii)$ and $iii)$ of Proposition \ref{geom} fail.

\item The condition on the diameter in $ii)$ is probably technical. In fact, in the case of the first eigenfunction we know (see \cite{lw}) that the maximum is unique for any convex domain, without any diameter restriction.
\item A natural question is whether it is possible to prove Theorem \ref{main} on the mere assumption of the convexity of $\Om$. This is not possible, as pointed out here above, in the case of the first Dirichlet eigenfunction of geodesically convex sets of $\mathbb H^2$ (but not horoconvex). Hence, in the hyperbolic case, the additional assumption of horoconvexity in $iii)$ does not seem to be just due to technical reasons.  
\item  As mentioned before, the results for the first Dirichlet eigenfunction on $\mathbb R^2$ and $\mathbb S^2$ are known. Corollary \ref{eigenfunction} implies that the first Dirichlet eigenfunction on horoconvex domains of $\mathbb H^2$ has a unique non-degenerate critical point. To the best of our knowledge, this is the first result of this kind for domains in negatively curved manifolds.
\end{itemize}
\end{rem}

\subsection{Strategy of the proof}
Now we give some ideas  about the proof of Theorem \ref{main}. Denote by $\mathcal{C}$ the set of critical points of $u$, namely
$$
\mathcal{C}=\{x\in M\hbox{ such that }\nabla u(x)=0\}.
$$
One of the main difficulties in describing the critical points of solutions to \eqref{DL} is that (a priori) the set $\mathcal{C}$ can have a complicated shape. It is not even  guaranteed that the $\mathcal{C}$ is finite, nor that its points are isolated. This is a serious problem if we want to apply classical tools as Morse theory or degree arguments. As we will show below, one of the main steps of our proof will be to prove that the set $\mathcal{C}$ consists of isolated points.

One of the most important tools that we use is the celebrated Poincar\'e-Hopf Theorem which links the index of the zeros of any vector field $V$ on a domain $\Om$ with the Euler characteristic of $\Om$  (see Section \ref{pre} and Theorem \ref{PH} for basic definitions and the statement of the results). In our setting, where $\Om$ is a contractible subset of a two-dimensional Riemannian manifold, it could be summarized by the formula
\begin{equation}\label{for}
\sum_i{\rm Ind}_{p_i}V= 1
\end{equation}
where the sum runs on the $isolated$ zeros of $V$. Usually \eqref{for} is applied to $V=\nabla u$, providing a balance on the critical points of $u$. Of course it says nothing about the exact number of the critical points of $u$. Actually  formula \eqref{for} will be applied to the following vector field
$$V=\nabla P=\nabla\left(\frac12|\nabla u|^2+F(u)\right)$$
where $F(s)=\int_0^sf(t)dt$ is the primitive of $f$. Note that this vector field was used in other context, see the $P$-functions in \cite{sp} or in \cite{yau2}, see also \cite{weinb}. At this stage two questions arise naturally,
\begin{enumerate}[1)]
\item \textit{Are the critical points of $P$ isolated}? 
\item  \textit{In which way the information on the number of critical points of $P$ allows to prove the uniqueness of the critical point of $u$}?
\end{enumerate}
The answer to the question $1)$ is the more delicate. It will be given in several steps
\begin{enumerate}[i)]
\item First we observe that critical points for $u$ are critical points for $P$.
\item We want to prove that the reciprocal implication is also true. Assume by contradiction that $p\in\Om$ is a not a critical point of $u$ and $\nabla P(p)=0$. Next we introduce an auxiliary function $Z:\Omega\to\mathbb R$ which is not identically zero and vanishes with its gradient at a point $p$ (this will be a consequence of the assumption $\nabla P(p)=0$). So $p$ is a \text{singular point} for $Z$ and classical results (see \cite{cf3} or \cite{hs}) imply that locally the zero-set of $u$ is given by a finite number of curves intersecting transversally. Then, we show that the convexity of  $\Om$ implies that $Z$ has exactly $two$ zeros on $\partial\Om$. We reach a contradiction by a topological argument: the function $Z$ defined in this way turns out to be a  Dirichlet eigenfunction of the stability operator $\mathcal L$ in a proper subdomain of $\Omega$ with eigenvalue $0$, and this implies that the first eigenvalue of $\mathcal L$ on $\Omega$ is strictly negative, contradicting the semi-stability assumption on $u$.
\item By the previous step we get that the number of critical points of $u$ coincides with the number of critical points of $P$.
Next we show that they are non-degenerate, and we do this in the same spirit of point $ii)$, using a suitable auxiliary function $W$.
\end{enumerate}

Once we know that the critical points of $u$  coincide with those of $P$ and they are not degenerate, we can apply the Poincar\'e-Hopf Theorem. Note that the crucial assumption $\langle \nabla P,\nu\rangle<0$ is verified by the fact that $\partial\Omega$ has positive curvature. A straightforward computation shows that the index of {\it any} critical point of $P$ is $one$  (this is a consequence of the non-degeneracy of the critical points of $u$) and so  by  \eqref{for}  we have
 \begin{equation}\label{for2}
\sharp\{\hbox{critical points of }P\}
=\sum_i{\rm Ind}_{p_i}\nabla(P)= 1
\end{equation}
 which gives the uniqueness (and by the previous discussion also the non-degeneracy) of the critical point of $P$, and then the same holds for $u$.

\begin{rem}\label{other}
We remark that the ideas used in this paper yield the same results in other situations, for example, it is straightforward to prove Theorem \ref{main} when $M$ is any coaxial cylinder in $\mathbb R^3$ or any flat torus, and $\Omega$ is a contractible convex set of $M$ with positive boundary curvature.
\end{rem}
The present paper is organized as follows. In Section \ref{pre} we collect a few preliminary results needed for the proof of Theorem \ref{main}, which is presented in Section \ref{proof}. In Appendix \ref{revolution} we restrict to the case of the first Dirichlet eigenfunction and we take another point of view, namely we consider the problem of describing the critical points on more general manifolds of revolution (with or without boundary) of any dimension.

{\bf Acknowledgments.} The authors would like to thank Gabriel Khan for pointing out an important list of references.

\section{Preliminaries and well known facts}\label{pre}
In this section we collect a few preliminary results and examples which will be useful in the proof of Theorem \ref{main}. Throughout the paper, for a Riemannian manifold $(M,g)$, we denote by $\langle\cdot,\cdot\rangle$ the inner product on the tangent spaces of $M$ associated with the metric $g$.

\subsection{Killing vector fields}
We start by recalling the definition of Killing vector field.

\begin{defn}\label{killing}
Let $(M,g)$ be a complete $n$-dimensional Riemannian manifold. A smooth vector field $K$ on $M$ is said to be {\it Killing} if, for every vector fields $X,Y$
\begin{equation}\label{Keq0}
\mathcal L_Kg(X,Y)=0,
\end{equation}
that is, the Lie derivative of $g$ with respect to $K$ vanishes.
\end{defn}
The {\it Killing equation} \eqref{Keq0} is equivalent to
\begin{equation}\label{Keq}
\langle \nabla_X K,Y\rangle+\langle \nabla_Y K,X\rangle=0.
\end{equation}
A further equivalent definition is the following: $K$ is Killing if the flow of $K$ is a local $1$-parameter group of isometries.

We recall a few consequences of Definition \ref{killing}. Let $K$ be a Killing vector field on $M$. Then
\begin{enumerate}
\item ${\rm div}K=0$;
\item if $K$ is pointwise tangential to an embedded submanifold $N$ of $M$, then $K|_{N}$ is a Killing vector field on $N$;
\item $\Delta K(u)=K(\Delta u)$ for any smooth function $u$, i.e., $K$ commutes with the Laplacian; also the reciprocal is true: if $K$ commutes with the Laplacian, then it is Killing.

\end{enumerate}
For more information on Killing vector fields  we refer to \cite[\S 8]{petersen}.

In the proof of Theorem \ref{main} the existence of global Killing fields will play a special role. Actually, we will ask more than the existence of some global Killing field. Namely, we will require that for any $p\in M$ and $v\in T_pM$ there exists a Killing field $K$ such that the geodesic $\gamma$ with $\gamma(0)=p$, $\gamma'(0)=v$ is an integral curve of $K$. This is true for the model spaces $\mathbb S^2,\mathbb R^2,\mathbb H^2$, as the following examples show.

\begin{ex}\label{euex}
Let $M=\mathbb R^2$. Three linearly independent Killing vector fields are given, in Cartesian coordinates $(x,y)$ by:
\begin{itemize}
\item $K_1=\partial_x$; the integral curves are lines parallel to the $x$-axis and in particular they are all geodesics;
\item $K_2=\partial_y$; the integral curves are lines parallel to the $y$-axis  and in particular they are all geodesics;
\item $-y\partial_x+x\partial_y$; the integral curves are circles about the origin and none of them is a geodesic.
\end{itemize}
Let $p\in\mathbb R^2$ and $v\in T_pM$. We can assume without loss of generality that $p=(0,0)$ and $v=(1,0)$. The geodesic $\gamma$ such that $\gamma(0)=p$ and $\gamma'(0)=v$ is just the $x$-axis of equation $y=0$. Then the Killing field $K$ having $y=0$ as integral curve is $K_1$.

An equivalent way of saying this is that, given a fixed system of Cartesian coordinates centered at $p$, the Killing vector field having a geodesic integral curve through $p$ is a linear combination of $K_1$ and $K_2$. The Killing field $K_3$ does not play any role since none of its integral curves is a geodesic.

\end{ex}

\begin{ex}\label{exsph}
Let $M=\mathbb S^2\subset\mathbb R^3$. Let $X=z\partial_y-y\partial_z$, $Y=z\partial_x-x\partial_z$ and $Z=-y\partial_x+x\partial_y$ be the three Killing vector fields in $\mathbb R^3$ (with Cartesian coordinates $(x,y,z)$) which are the generators of the rotations about the coordinate axes. They are tangential to $\mathbb S^2$ and therefore their restrictions to $\mathbb S^2$ are Killing vector fields on $\mathbb S^2$. Let $(\theta,\phi)\in[0,\pi]\times[0,2\pi]$ be the standard system of spherical coordinates where the north pole $(0,0,1)$ corresponds to $\theta=0$, while $(1,0,0)$ corresponds to $(\theta,\phi)=(\pi/2,0)$ and $(0,1,0)$ corresponds to $(\theta,\phi)=(\pi/2,\pi/2)$. Therefore three linear independent Killing fields are:
\begin{itemize}
\item $K_1=X|_{\mathbb S^2}=\sin(\phi)\partial_{\theta}+\cot(\theta)\cos(\phi)\partial_{\phi}$; the integral curves are spherical circles centered at $(\pm 1,0,0)$ and exactly one of such integral curves is a geodesic, namely the great circle $\mathbb S^2\cap\{x=0\}$;
\item $K_2=Y|_{\mathbb S^2}=\cos(\phi)\partial_{\theta}-\cot(\theta)\sin(\phi)\partial_{\phi}$; the integral curves are spherical circles centered at $(0,\pm 1,0)$ and exactly one of such integral curves is a geodesic, namely the great circle $\mathbb S^2\cap\{y=0\}$;
\item $K_3=Z|_{\mathbb S^2}=\partial_{\phi}$; the integral curves are spherical circles centered at $(0,0,\pm 1)$ and exactly one of such integral curves is a geodesic, namely the great circle $\mathbb S^2\cap\{z=0\}$, i.e., the equator.
\end{itemize}
Let $p\in\mathbb S^2$ and let $v\in T_p M$. We can assume without loss of generality that $p=(0,0,1)$ is the north pole and that $v=(0,1,0)$ (here we are thinking of $v$ as a vector in $\mathbb R^3$). Then the geodesic $\gamma$ through $p$ with $\gamma'(p)=(0,1,0)$ is the great circle $\mathbb S^2\cap\{x=0\}$, and consequently the Killing field having $\gamma$ as integral curve is $K_1$.  Also in this case, we note that if $p\in\mathbb S^2$ assuming that $p=(0,0,1)$, then any Killing field having as integral curve a geodesic through $p$ is a linear combination of  $K_1$ and $K_2$. Again, we note that, given a point $p$, the Killing field corresponding to the rotations around $p$, i.e., $K_3$, does not come into play.
\end{ex}
\begin{ex}\label{exhyp}
Let $M=\mathbb H^2$. Consider the Poincaré disk model for $\mathbb H^2$. Namely, we consider $D$ to be the open unit disk of $\mathbb R^2$ with Cartesian coordinates $(x,y)$ endowed with the metric $\frac{4}{(1-x^2-y^2)^2}(dx^2+dy^2)$.

Three linearly independent Killing vector fields are given by:
\begin{itemize}
\item $K_1=\frac{1-x^2+y^2}{2}\partial_x-xy\partial_y$; the integral curves are the intersection of $D$ with arcs of circles with centers on $x=0$ and passing through $(\pm 1,0)$; this include also the segment $(-1,1)\times\{0\}$ which is the unique geodesic integral line of $K_1$;
\item $K_2=-x y\partial_x+\frac{1-x^2+y^2}{2}\partial_y$; the integral curves are the intersection of $D$ with arcs of circles with centers on $y=0$ and passing through $(0,\pm 1)$; this include also the segment $\{0\}\times(-1,1)$ which is the unique geodesic integral line of $K_2$;
\item $K_3=-y\partial_x+x\partial_y$; the integral curves are circles centered at the origin and none of them is a geodesic.
\end{itemize}
Recalling that the geodesics in the Poincaré disk model are segments through the origin and arcs of circles in $D$ meeting $D$ orthogonally, we see that $K_1,K_2$ have only one integral curve which is a geodesic, while $K_3$ has none.

Let $p\in\mathbb H^2$. We can always consider the Poincaré disk model centered at $p$. Hence, without loss of generality we can assume that $v\in T_pM$ is given by $v=(0,1)$ in the disk model. Therefore the Killing vector field  having the (geodesic) segment $(-1,1)\times\{0\}$ as integral curve is $K_1$. Again, fixing a coordinate system centered at $p$, this amount to saying that a Killing field having as integral curve a geodesic through $p$ is a linear combination of $K_1$ and $K_2$, while $K_3$ does not play any role.
\end{ex}

\subsection{Poincaré-Hopf Theorem}
In this subsection we recall the Poincaré-Hopf Theorem, which relates the the zeros of a vector field with the Euler characteristic of the underlying manifold. In order to do so, we need some preliminary definitions.

\begin{defn}
Let $U\subset\mathbb R^n$ be an open set, $p\in U$, and let $\Phi:U\to\mathbb R^n$ be a continuous mapping such that $\Phi(p)=0$. Let $\eps>0$ be such that $B(p,\eps)\subset U$ does not contain other pre-images of the origin except $p$. The {\it local degree} of the map $\Phi$ at the point $p$ is defined as the degree of the mapping
$$
\frac{\Phi}{|\Phi|}:\partial B(p,\eps)\to\mathbb S^{n-1}
$$
where $\mathbb S^{n-1}$ is the unit sphere in $\mathbb R^n$.
\end{defn}

Let $M$ be a $n$-dimensional Riemannian manifold and let $V$ be a vector field on $M$. A point $p\in M$  such that $V(p)=0$ is called a {\it singular point} of $V$. In a system of local coordinates $x_1,...,x_n$ around $p$, we can write $V=\sum_{i=1}^nV_i(x)\partial_{x_i}$. We assume that the coordinates $(x_1,...,x_n)$ are defined in a neighborhood $U$ of the origin in $\mathbb R^n$, and that $p$ corresponds to the origin in $\mathbb R^n$.

\begin{defn}
Let $V$ be a vector field on $M$ and let $p$ be an isolated singular point of $V$. Let $V=(V_1,...,V_n)$ in a local coordinate system $(x_1,...,x_n)\in U\subset\mathbb R^n$ around $p$. The {\it index} ${\rm Ind}_{p}V$ of $V$ at $p$ is the local degree of the mapping $(V_1,...,V_n):U\to\mathbb R^n$
\end{defn}

We are ready to state the
\begin{thm}\label{PH}[Poincaré-Hopf Theorem]
Let $M$ be a $n$-dimensional Riemannian orientable manifold, with or without boundary, and let $V$ be a vector field on $M$ with isolated zeros $p_i$. If $\partial M\ne\emptyset$, assume that $\langle V,\nu\rangle <0$, i.e., $\langle V,\nu\rangle$ does not vanish (and has constant sign) on $\partial M$, where $\nu$ is the conormal vector to $\partial M$. Then
$$
\sum_i{\rm Ind}_{p_i}V= (-1)^n\chi(M)
$$
where $\chi(M)$ is the Euler characteristic of $M$.
\end{thm}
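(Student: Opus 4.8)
The plan is to reduce everything to the classical Poincaré–Hopf theorem in its standard forms—namely the boundaryless case, together with the version for a compact manifold on which the field points \emph{outward} along the boundary—both of which conclude that the index sum equals $\chi(M)$. These I take as known. The only genuine work is then to reconcile the sign conventions and the (inward-pointing) boundary hypothesis of the present statement with those standard forms.

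First I would observe that the hypothesis $\langle V,\nu\rangle<0$ on $\partial M$ says precisely that $V$ points strictly inward (with $\nu$ the outward conormal). Hence the vector field $W:=-V$ has exactly the same isolated zeros $p_i$ as $V$, while $\langle W,\nu\rangle=-\langle V,\nu\rangle>0$, so $W$ points strictly outward along $\partial M$. The standard outward-pointing Poincaré–Hopf theorem therefore applies to $W$ and gives $\sum_i{\rm Ind}_{p_i}W=\chi(M)$.

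Next I would relate the indices of $W$ and $V$ at each zero. In a local chart centered at $p_i$ one has $W/|W|=a\circ(V/|V|)$, where $a:\mathbb S^{n-1}\to\mathbb S^{n-1}$ is the antipodal map $x\mapsto-x$, whose degree is $(-1)^n$. By multiplicativity of the topological degree, ${\rm Ind}_{p_i}W=(-1)^n\,{\rm Ind}_{p_i}V$. Summing and inserting the previous identity gives $(-1)^n\sum_i{\rm Ind}_{p_i}V=\chi(M)$, and multiplying by $(-1)^n$ yields the desired $\sum_i{\rm Ind}_{p_i}V=(-1)^n\chi(M)$. When $\partial M=\emptyset$ the boundaryless classical theorem gives directly $\sum_i{\rm Ind}_{p_i}V=\chi(M)$, and this agrees with the claimed value because a closed odd-dimensional manifold has $\chi(M)=0$, so that $\chi(M)=(-1)^n\chi(M)$ regardless of the parity of $n$.

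The substantive content lies entirely in the classical theorem being invoked, whose proof is the genuinely hard part: one embeds $M$ in some $\mathbb R^N$, identifies $\sum_i{\rm Ind}_{p_i}V$ with the degree of the outward Gauss map of the boundary of a tubular neighborhood—thereby showing the index sum is independent of the particular field—and finally evaluates this common value as $\chi(M)$ by taking $V$ to be the gradient of a Morse function, whose nondegenerate zeros contribute $(-1)^{\lambda}$ according to their Morse index $\lambda$. Within the present reduction, by contrast, the only points needing care are that the boundary sign condition is preserved under $V\mapsto-V$ and that the local degree obeys the antipodal sign rule, both of which are elementary.
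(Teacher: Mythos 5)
Your proposal is correct, but there is no internal proof in the paper to compare it against: the authors do not prove Theorem \ref{PH}; they state it and refer to the literature (``For the proof of this result see e.g., \cite[page 2]{bss}''). Relative to that bare citation, your reduction is genuinely useful, because the version stated here --- with the field pointing \emph{inward} ($\langle V,\nu\rangle<0$ for $\nu$ the outward conormal) and the sign $(-1)^n\chi(M)$ --- differs from the most commonly quoted forms, and your argument reconciles the conventions. Each step checks out: $W:=-V$ has the same isolated zeros and satisfies $\langle W,\nu\rangle>0$, so the standard outward-pointing theorem gives $\sum_i{\rm Ind}_{p_i}W=\chi(M)$; the local identity $W/|W|=a\circ(V/|V|)$, with $a$ the antipodal map of $\mathbb S^{n-1}$ of degree $(-1)^{(n-1)+1}=(-1)^n$, gives ${\rm Ind}_{p_i}W=(-1)^n{\rm Ind}_{p_i}V$ by multiplicativity of the degree under composition; and in the boundaryless case the apparent discrepancy between $\chi(M)$ and $(-1)^n\chi(M)$ disappears because a closed odd-dimensional manifold has vanishing Euler characteristic. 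As a consistency check, the inward-pointing form is often stated as $\sum_i{\rm Ind}_{p_i}V=\chi(M)-\chi(\partial M)$, which agrees with your $(-1)^n\chi(M)$ via Poincar\'e--Lefschetz duality, since $\chi(\partial M)=\chi(M)-(-1)^n\chi(M)$; note also that for the paper's application ($n=2$, $V=\nabla P$) the sign is immaterial. One point you should make explicit: the classical theorems you invoke require $M$ compact (and hence the zero set finite); the paper's statement tacitly assumes this as well, since otherwise the result fails, e.g.\ for a nowhere-vanishing field on $\mathbb R^2$, which has $\chi=1$. With compactness added, your argument is complete.
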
 

For the proof of this result see e.g.,  \cite[page 2]{bss}.

Finally, we note that if a vector field $V$ on $M$ has a singular point at $p$ which is non-degenerate, i.e., ${\rm det}\left(\partial_{x_j}V_i(0)\right)\ne 0$, then ${\rm Ind}_pV={\rm sign}\,{\rm det}\left(\partial_{x_j}V_i(0)\right)$. Here $(x_1,...,x_n)$ is a local system of coordinates around $p$ (which corresponds to $0\in\mathbb R^n$) and $V=\sum_{i=1}^nV_i(x)\partial_{x_i}$.
\subsection{Nodal lines}
We end this section recalling a classical result (see \cite{hs}) on the behavior of solutions $v$ of elliptic equations on planar domains at a point $q$ where $v(q)=\nabla v(q)=0$ ($q$ is said to be a singular point). 
\begin{thm}\label{nod}
Suppose that $v$ is a non-constant solution to an elliptic equation of second order with smooth coefficients on a domain $\Omega\subset\mathbb R^2$. Then $v^{-1}\{0\}$ decomposes into the disjoint union $\left(v^{-1}\{0\}\cap\{|\nabla v|>0\}\right)\cup\left(v^{-1}\{0\}\cap|\nabla v|^{-1}\{0\}\right)$ of smooth one-dimensional manifolds having finite one-dimensional measure in each compact subset of $\Omega$, and the set of isolated singular points $v^{-1}(0)\cap\{|\nabla v|=0\}$.
\end{thm}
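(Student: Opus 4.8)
The plan is to split $v^{-1}\{0\}$ according to whether $\nabla v$ vanishes, and to treat the two pieces by completely different tools: the implicit function theorem for the regular part, and a Hartman--Wintner type asymptotic expansion for the singular part. Throughout I would use that elliptic regularity with smooth coefficients makes the non-constant solution $v$ smooth, so the only issue is the \emph{geometry} of its zero set, not the regularity of $v$ itself.

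First I would handle the regular part $v^{-1}\{0\}\cap\{|\nabla v|>0\}$. At any point $x_0$ with $v(x_0)=0$ and $\nabla v(x_0)\ne0$ the implicit function theorem immediately shows that in a neighborhood of $x_0$ the set $\{v=0\}$ is the graph of a smooth function of one variable, hence a smooth embedded one-dimensional manifold. This is the easy half and uses no PDE structure beyond smoothness of $v$.

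Second, and this is where the elliptic structure really enters, I would analyze the singular set $v^{-1}\{0\}\cap|\nabla v|^{-1}\{0\}$. The main tool is the classical asymptotic result of Hartman and Wintner (in the form refined by Bers): freezing the principal part of the operator at a singular point $q$ and performing a linear change of coordinates that turns it into the Laplacian, if $v$ vanishes at $q$ to \emph{finite} order $N$ then
\[
v(x)=p_N(x-q)+o(|x-q|^N),\qquad \nabla v(x)=\nabla p_N(x-q)+o(|x-q|^{N-1}),
\]
where $p_N$ is a nonzero homogeneous harmonic polynomial of degree $N$. Finiteness of $N$ is guaranteed by strong unique continuation (infinite-order vanishing would force $v\equiv0$, contrary to hypothesis), and at a singular point $N\ge2$, since $N=1$ would give $\nabla v(q)\ne0$. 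I would then read off the local geometry from the leading term: in two dimensions every homogeneous harmonic polynomial of degree $N$ equals, up to rotation and a nonzero constant, $r^N\cos(N\theta)$ in polar coordinates centered at $q$, whose zero set is exactly $N$ straight lines through the origin crossing transversally at equal angles $\pi/N$. The expansion shows that near $q$ the nodal set consists of $2N$ arcs emanating from $q$ tangent to these lines, with $\nabla v\ne0$ on each arc away from $q$ (because $\nabla p_N$ is homogeneous of degree $N-1$ and nonvanishing off the origin, so it dominates the $o(|x-q|^{N-1})$ error near the nodal directions). Hence $q$ is the only singular point in a punctured neighborhood: the singular points are \emph{isolated}, and each contributes finitely many $C^1$ arcs. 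Consequently, on any compact $K\subset\Omega$ there are finitely many singular points, the nodal set is a finite union of smooth arcs, and $\mathcal H^1\!\left(v^{-1}\{0\}\cap K\right)<\infty$.

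The main obstacle is establishing the Hartman--Wintner expansion with a genuinely harmonic, homogeneous leading polynomial: freezing the coefficients introduces a perturbation, and one must show, via the representation of solutions of the constant-coefficient Laplace equation together with a careful estimate of the perturbation, that the lowest-order nonvanishing part of $v$ is harmonic and homogeneous, with a remainder that is $o(|x-q|^N)$ \emph{together with its gradient}. Everything else — the implicit function theorem, the polar description of planar harmonic polynomials, and the bookkeeping yielding local finiteness of $\mathcal H^1$ — is comparatively routine once this expansion and the finiteness of the vanishing order are in hand.
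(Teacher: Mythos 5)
The paper itself offers no proof of this statement: Theorem~\ref{nod} is quoted as a classical result and attributed to \cite{hs}, so there is no internal argument to compare against. Your proposal correctly reconstructs the standard proof behind that citation: the implicit function theorem for the regular part, and the Hartman--Wintner/Bers expansion for the singular part, with finiteness of the vanishing order supplied by strong unique continuation (valid here since the coefficients are smooth). Two remarks on the details. First, your isolation argument is in fact stronger than you state: a planar homogeneous harmonic polynomial is $p_N=\mathrm{Re}\left(cz^N\right)$, so $|\nabla p_N|=N|c|\,|z|^{N-1}$ vanishes \emph{only} at the origin, and the gradient expansion then gives $|\nabla v|>0$ in an entire punctured neighborhood of $q$, not merely near the nodal directions; isolation of singular points is immediate from this. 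Second, the step you dismiss as routine bookkeeping is the one place where an actual argument is still owed, because the finite $\mathcal H^1$ claim rests on it: the expansion alone does not yet say the nodal set near $q$ is a union of $2N$ finite-length arcs. The standard way to extract this is a circle-wise implicit function theorem: writing $v$ in polar coordinates centered at $q$, one has $|\partial_\theta v|\ge c\,r^N$ in small angular sectors around the $2N$ zero directions of $p_N$ and $|v|\ge c\,r^N$ outside them, so $v(r,\cdot)$ has exactly $2N$ zeros $\theta_j(r)$, each depending smoothly on $r$; differentiating $v(r,\theta_j(r))=0$ gives $|\theta_j'(r)|=|\partial_r v/\partial_\theta v|\le C/r$, hence $r|\theta_j'(r)|\le C$ and each arc has finite length. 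With that inserted, covering the compact set $v^{-1}\{0\}\cap K$ by finitely many such regular and singular neighborhoods finishes the measure estimate. In short, your route is the classical one implicit in the paper's citation, correct in outline, with only this local structure step needing to be written out in full.
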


The previous theorem implies that, in a neighborhood of a singular point $q$,
 the zero-set of $v$ is given by (at least) two curves which intersect transversally. In this paper we are interested in the equation  $-\Delta v=a  v$ on two dimensional Riemannian manifolds, where $a$ is a smooth function. Using a local  coordinate chart around a singular point $q$ of $v$,  we deduce the following
\begin{cor}\label{cornod}
Let $v$ be a solution of $-\Delta v=a v$, on some domain $\Omega\subset M$, where $M$ is a two-dimensional Riemannian manifold and $a$ a smooth function. Let $q\in\Omega$ be such that $v(q)=\nabla v(q)=0$. Then, in a neighborhood of $q$ the set $v^{-1}(0)$ is given by (at least) two curves which intersect transversally.
\end{cor}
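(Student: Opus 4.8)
The plan is to reduce the statement to the planar case of Theorem~\ref{nod} by passing to a local coordinate chart. First I would fix a chart $\psi\colon U\to\psi(U)\subset\R^2$ on a neighborhood $U$ of $q$ with $\psi(q)=0$ and transport every object through it. In these coordinates the Laplace--Beltrami operator reads
$$
\Delta=\frac{1}{\sqrt{|g|}}\,\partial_i\!\left(\sqrt{|g|}\,g^{ij}\partial_j\right),
$$
whose principal part $g^{ij}(x)\partial_i\partial_j$ is elliptic, because $(g_{ij})$ is positive definite, and has smooth coefficients, because $g$ is smooth. Setting $\tilde v=v\circ\psi^{-1}$ and $\tilde a=a\circ\psi^{-1}$, the function $\tilde v$ then solves the second-order elliptic equation
$$
\frac{1}{\sqrt{|g|}}\,\partial_i\!\left(\sqrt{|g|}\,g^{ij}\partial_j\tilde v\right)+\tilde a\,\tilde v=0
$$
with smooth coefficients on the planar domain $\psi(U)$, which is precisely the framework of Theorem~\ref{nod}.

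Next I would verify that the hypotheses transfer correctly. We may assume $v$ is non-constant (otherwise $v\equiv 0$ by $v(q)=0$, and the claim is empty), so $\tilde v$ is non-constant since $\psi$ is a diffeomorphism. The condition $\nabla v(q)=0$ is equivalent to the vanishing of the differential $dv_q$, as the Riemannian gradient is obtained from $dv$ by raising the index with the invertible tensor $g^{ij}$; and by the chain rule $dv_q=0$ is in turn equivalent to $\nabla_{\mathrm{eucl}}\tilde v(0)=0$. Together with $\tilde v(0)=0$, this says that the origin is a singular point of $\tilde v$. Applying Theorem~\ref{nod}, together with the consequence recorded right after its statement, I conclude that in a neighborhood of $0$ the set $\tilde v^{-1}(0)$ consists of (at least) two curves meeting transversally.

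It remains to transport this local picture back to $M$. Since $v^{-1}(0)\cap U=\psi^{-1}\bigl(\tilde v^{-1}(0)\cap\psi(U)\bigr)$ and $\psi^{-1}$ is a diffeomorphism, it maps the two curves to curves in $U$ and preserves the transversality of their crossing, because $d\psi^{-1}$ is a linear isomorphism on each tangent space. This yields the asserted description of $v^{-1}(0)$ near $q$. The reduction is largely bookkeeping; the only points requiring genuine care are that the coordinate expression of $\Delta$ falls within the smooth-coefficient elliptic class covered by Theorem~\ref{nod}, and that the two coordinate-free notions in play---a point being singular for the solution, and two nodal curves meeting transversally---are invariant under the chart map. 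Once these invariances are recorded, the conclusion is immediate from the planar statement, and I do not expect any substantial additional difficulty.
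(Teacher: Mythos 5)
Your proof is correct and takes essentially the same approach as the paper: the paper deduces Corollary \ref{cornod} from Theorem \ref{nod} precisely by ``using a local coordinate chart around a singular point,'' which is exactly your reduction, with the chart-invariance details (ellipticity and smoothness of the coordinate expression of $\Delta$, invariance of the singular-point condition, preservation of transversality under the diffeomorphism) spelled out rather than left implicit. No gap here; your implicit non-triviality assumption on $v$ matches the one the paper also makes tacitly.
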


\section{Proof of Theorem \ref{main}}\label{proof}
\subsection{The auxiliary function $P$}
 Through all this section, $u$ is a positive, semi-stable solution of \eqref{DL}. We define
\begin{equation}\label{PF}
P:=\frac{1}{2}|\nabla u|^2+F(u).
\end{equation}
with $F(s)=\int_0^sf(t)dt$.

We have the following expression for the gradient of $P$:
\begin{equation}\label{DF-normal}
\nabla P=\nabla_{\nabla u}\nabla u-\Delta  u\nabla u.
\end{equation}

\subsection{The zeros of $\nabla P$ coincide with the zeros of $\nabla u$}\label{sub_Z}
From the definition of $P$, it turns out that if $p$ is a zero of $\nabla u$, then it is also a zero of $\nabla P$. The aim of this subsection is to show that the vice-versa holds true.

Let $p\in\Omega$ be such that $\nabla u(p)\ne 0$. Let $v\in T_pM$ be orthogonal to $\nabla u(p)$, and let $\gamma$ be the unique geodesic such that $\gamma(0)=p$, $\gamma'(0)=v$. Then $\gamma$ is the integral curve of some Killing vector field $K$ (see also Examples \ref{euex},\ref{exsph} and \ref{exhyp}). We  define
\begin{equation}\label{Z1}
Z:=K(u).
\end{equation}

\begin{lem}\label{commuting}
The function $Z$ does not vanish identically and satisfies
$$
-\Delta Z=f'(u) Z
$$
in $\Omega$. Moreover, $Z(p)=0$.
\end{lem}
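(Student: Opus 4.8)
The plan is to verify the three assertions separately; only the non-vanishing of $Z$ uses the geometric hypotheses, while the identity $Z(p)=0$ and the equation are formal consequences of $K$ being Killing.

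First I would dispatch the two easy points. For the value at $p$: since $\gamma$ is an integral curve of $K$ with $\gamma(0)=p$, we have $K(p)=\gamma'(0)=v$, hence $Z(p)=(Ku)(p)=\langle\nabla u(p),v\rangle=0$ because $v$ was chosen orthogonal to $\nabla u(p)$. For the equation I would invoke the commutation property $\Delta(Ku)=K(\Delta u)$ recalled after Definition \ref{killing}, together with the fact that $K$ acts as a first-order derivation. This gives $\Delta Z=\Delta(Ku)=K(\Delta u)$; since $-\Delta u=f(u)$, the chain rule yields $K(\Delta u)=-K(f(u))=-f'(u)\,Ku=-f'(u)Z$, which is exactly $-\Delta Z=f'(u)Z$.

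The substantive point—and the only place convexity, positivity and the boundary condition enter—is that $Z\not\equiv 0$. I would introduce $h(t):=u(\gamma(t))$. Since $\gamma'=K\circ\gamma$, its derivative is $h'(t)=\langle\nabla u(\gamma(t)),\gamma'(t)\rangle=Z(\gamma(t))$. Arguing by contradiction, suppose $Z\equiv 0$ in $\Omega$; then $h$ is constant on the parameter interval along which $\gamma$ stays in $\Omega$, so $u$ is constant along $\gamma\cap\Omega$. Because $\Omega$ is convex and $\gamma$ is a geodesic through $p\in\Omega$, the set $\gamma\cap\Omega$ is a single connected geodesic segment whose two endpoints lie on $\partial\Omega$, where $u=0$. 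By continuity the constant value of $u$ along this segment must equal $0$, forcing $u(p)=0$ and contradicting the positivity of $u$ in $\Omega$. Hence $Z\not\equiv 0$.

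I expect the only genuinely delicate step to be the claim that $\gamma\cap\Omega$ is connected with both endpoints on $\partial\Omega$. In the Euclidean and hyperbolic cases $\gamma$ is a complete geodesic meeting the bounded convex set $\Omega$ in a single chord; in the spherical case the diameter bound (diameter smaller than $\tfrac{\pi}{2}$) is what guarantees that the great circle $\gamma$ meets $\Omega$ in one arc rather than two. This is precisely where the hypotheses of Theorem \ref{main} make the argument uniform across the three model spaces.
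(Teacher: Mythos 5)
Your proof is correct, and its two routine parts coincide with the paper's: $Z(p)=0$ because $K(p)=\gamma'(0)=v\perp\nabla u(p)$, and the equation $-\Delta Z=f'(u)Z$ because $K$ commutes with the Laplacian and acts as a derivation on $f(u)$. Where you genuinely diverge is the non-vanishing of $Z$. The paper argues on the boundary: if $Z\equiv 0$ in $\Omega$ then by continuity $Z\equiv 0$ on $\partial\Omega$; since $u=0$ there and $\nabla u\neq 0$ there (Hopf's lemma, using $f(0)\geq 0$), the gradient is normal to $\partial\Omega$, so $K$ is everywhere tangent to $\partial\Omega$, making $\partial\Omega$ an integral curve of $K$; but the geodesic $\gamma$ is a distinct integral curve of $K$ which must meet $\partial\Omega$, contradicting the fact that distinct maximal integral curves are disjoint. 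You instead integrate along $\gamma$ inside $\Omega$: $Z\equiv0$ forces $u$ to be constant on the chord $\gamma\cap\Omega$, and the boundary values force that constant to be $0$, contradicting $u(p)>0$. Your route is more elementary --- it needs neither Hopf's lemma nor uniqueness of integral curves --- at the price of invoking convexity (and, on $\mathbb S^2$, the diameter bound) to control $\gamma\cap\Omega$. In fact, the step you flag as the delicate one can be dispensed with entirely: you never need $\gamma\cap\Omega$ to be connected, only its connected component containing $p$, which is automatically an open sub-arc of $\gamma$ with both endpoints on $\partial\Omega$, because $\Omega$ is bounded (on $\mathbb S^2$, because a great circle has diameter $\pi$ and so cannot lie inside a set of diameter less than $\tfrac{\pi}{2}$). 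With that simplification your argument, like the paper's, uses no convexity at all for this lemma; the geometric hypotheses are only truly needed later, in Proposition \ref{geom}, to count the zeros of $Z$ on $\partial\Omega$.
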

\begin{proof}
The fact that $Z$ does not vanish identically is straightforward to check.  In fact, if $Z\equiv 0$ on $\Omega$, then $Z\equiv 0$ on $\partial\Omega$, which implies that $\partial\Omega$ is an integral curve of $K$ . On the other hand, the geodesic $\gamma$ through $p$ is an integral curve of $K$ which has non-empty intersection with $\partial\Omega$, which is not possible (alternatively, from Proposition \ref{geom} it follows that $Z$ has exactly two zeros on $\partial\Omega$, hence it cannot vanish identically in $\Omega$).

Since $K$ is a Killing vector field, it commutes with the Laplacian: $\Delta K(u)=K(\Delta u)$. This implies that $-\Delta Z=f'(u) Z$. The fact that $Z(p)=0$ follows just by construction, in fact $K$ is orthogonal to $\nabla u$ at $p$.
\end{proof}

\begin{ex}
In the case $M=\mathbb R^2$, we can assume without loss of generality that $p=(0,0)$ and $\nabla u(p)=(0,c)$ for some $c\ne 0$. Hence $v=(1,0)\in T_pM$ is orthogonal to $\nabla u(p)$. We are in the case of Example \ref{euex}, and Lemma \ref{commuting} simply says that $Z=u_x$ solves $-\Delta u_x=f'(u)u_x$, which is a trivial identity. Moreover, since $\nabla u(p)=(0,c)$, this implies that $Z(p)=u_x(p)=0$.

In the case of $M=\mathbb S^2\subset\mathbb R^3$, we can assume without loss of generality that $p=(0,0,1)$ and $\nabla u(p)=(1,0,0)$. Hence $v=(0,1,0)\in T_pM$ is orthogonal to $\nabla u(p)$. We are in the case of Example \ref{exsph}, which means that, in polar coordinates $(\theta,\phi)$ centered at the pole $p$, $Z=\sin(\phi)\partial_{\theta}u+\cot(\theta)\cos(\phi)\partial_{\phi}u$.  Lemma \ref{commuting} says that $-\Delta(\sin(\phi)\partial_{\theta}u+\cot(\theta)\cos(\phi)\partial_{\phi}u)=f'(u) (\sin(\phi)\partial_{\theta}u+\cot(\theta)\cos(\phi)\partial_{\phi}u)$, which can be easily verified since $\Delta=\partial^2_{\theta\theta}+\cot(\theta)\partial_{\theta}+\sin^{-2}(\theta)\partial^2_{\phi\phi}$.

Analogous explicit computations can be performed in the case of $\mathbb H^2$, using the explicit fields provided in Example \ref{exhyp}.
\end{ex}

Next we compute $\nabla Z(p)$.

\begin{lem}\label{DZ}
We have
\begin{equation}
\nabla Z(p)=\nabla_K\nabla u|_{p}.
\end{equation}
\end{lem}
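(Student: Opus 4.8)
The plan is to compute $\nabla Z$ directly from the definition $Z=K(u)=\langle K,\nabla u\rangle$, differentiate using compatibility of the Levi-Civita connection with the metric, and then exploit \emph{both} the Killing property of $K$ (Lemma \ref{commuting} only used that $K$ commutes with $\Delta$; here I need the full equation \eqref{Keq}) and the fact, central to the construction, that the integral curve $\gamma$ of $K$ through $p$ is a geodesic.

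First I would write, for an arbitrary vector field $X$,
$$\langle \nabla Z,X\rangle=X\langle K,\nabla u\rangle=\langle \nabla_X K,\nabla u\rangle+\langle K,\nabla_X\nabla u\rangle.$$
The second summand is the Hessian of $u$ evaluated on $X$ and $K$, and by the symmetry of $\mathrm{Hess}\,u$ it equals $\langle \nabla_K\nabla u,X\rangle$. For the first summand I would invoke the Killing equation \eqref{Keq}, which says precisely that the endomorphism $X\mapsto\nabla_X K$ is skew-symmetric, so that $\langle \nabla_X K,\nabla u\rangle=-\langle \nabla_{\nabla u}K,X\rangle$. Since $X$ is arbitrary, this yields the identity
$$\nabla Z=\nabla_K\nabla u-\nabla_{\nabla u}K,$$
valid throughout $\Omega$.

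It therefore remains to prove that $\nabla_{\nabla u}K$ vanishes at $p$; in fact I would establish the stronger fact that the whole endomorphism $A\colon X\mapsto\nabla_X K$ vanishes at $p$. The key observation is that $\gamma$, being simultaneously the integral curve of $K$ through $p$ and a geodesic, satisfies $\nabla_K K|_p=\nabla_{\gamma'}\gamma'|_0=0$. Now $A$ is skew-symmetric by \eqref{Keq}, and on an oriented surface a skew-symmetric endomorphism is a pointwise scalar multiple of the rotation $J$ by $\pi/2$, say $A=\lambda J$. Since $A(K(p))=\nabla_K K|_p=0$ and $K(p)=v\neq0$, we must have $\lambda(p)=0$, hence $A|_p=0$ and in particular $\nabla_{\nabla u}K|_p=0$. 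Substituting into the identity above gives $\nabla Z(p)=\nabla_K\nabla u|_p$, as claimed.

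The only delicate point is this vanishing of $A$ at $p$, and it is exactly where the special choice of $K$ enters: $K$ was selected so that the geodesic issued from $p$ in the direction $v\perp\nabla u(p)$ is one of its integral curves, which forces $\nabla_K K|_p=0$ and, combined with skew-symmetry, kills $A$ at $p$. If one prefers to avoid the two-dimensional $A=\lambda J$ normal form, the same conclusion follows componentwise in an orthonormal frame $e_1\parallel\nabla u(p)$, $e_2\parallel K(p)$: the diagonal entries $\langle\nabla_{e_i}K,e_i\rangle$ vanish by \eqref{Keq}, while $\langle\nabla_{e_1}K,e_2\rangle=-\langle\nabla_{e_2}K,e_1\rangle=0$ because $\nabla_{e_2}K$ is proportional to $\nabla_K K|_p=0$.
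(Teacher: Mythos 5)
Your proof is correct and follows essentially the same route as the paper: expand $\nabla Z$ by the product rule, then kill the extra term at $p$ by combining the skew-symmetry of $X\mapsto\nabla_X K$ from \eqref{Keq} with the geodesic identity $\nabla_K K|_p=0$; your componentwise fallback in the frame $(e_1,e_2)$ is literally the paper's computation of $\langle\nabla_{\nabla u}K,\nabla u\rangle$ and $\langle\nabla_{\nabla u}K,K\rangle$. Your version is in fact slightly tidier: you get the correct sign in the global identity $\nabla Z=\nabla_K\nabla u-\nabla_{\nabla u}K$ (the paper writes $+\nabla_{\nabla u}K$, harmless since the term vanishes at $p$), and you note the stronger fact that the whole endomorphism $\nabla K$ vanishes at $p$.
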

\begin{proof}
We compute
$$
\nabla Z=\nabla K(u)=\nabla_K\nabla u+\nabla_{\nabla u}K.
$$
In order to prove the Lemma, we need to show that the second summand vanishes at $p$. Now, by the Killing equation \eqref{Keq}
$$ 
\langle \nabla_{\nabla u}K,K\rangle=-\langle \nabla_K K,\nabla u\rangle
$$
and $\nabla_K K|_{p}=0$, since this is the covariant derivative of the  tangent to a geodesic along the geodesic. On the other hand,  by the Killing equation we have
$$ 
\langle \nabla_{\nabla u}K,\nabla u\rangle=-\langle \nabla_{\nabla u} K,\nabla u\rangle
$$
and hence $\langle \nabla_{\nabla u}K,\nabla u\rangle=0$. Since at $p$ $(K,\nabla u)$ forms a orthogonal frame, we conclude that $\nabla_{\nabla u}K|_p=0$.
\end{proof}

Finally, we compare $\nabla Z(p)$ and $\nabla P(p)$, assuming that $\nabla u(p)\ne 0$.

\begin{lem}\label{nablaZ}
Let $p\in\Omega$ be such that $\nabla u(p)\ne 0$ and let $Z$ be defined by \eqref{Z1}. Then
$$
|\nabla P(p)|=|\nabla u(p)||\nabla Z(p)|\,,\ \ \ \ \langle \nabla P(p),\nabla Z(p)\rangle=0.
$$
\end{lem}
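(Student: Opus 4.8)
The plan is to compute both gradients in a single orthonormal frame of $T_p\Omega$ adapted to the pair $\bigl(\nabla u(p),K(p)\bigr)$ and to read off the two claimed identities directly. First I would record the two structural facts already available: by \eqref{DF-normal} we have $\nabla P=\nabla_{\nabla u}\nabla u-\Delta u\,\nabla u$, and by Lemma \ref{DZ} we have $\nabla Z(p)=\nabla_K\nabla u|_{p}$. Both right-hand sides are values of the Hessian endomorphism $H:=\nabla^2u(p)$, defined by $H(X)=\nabla_X\nabla u|_{p}$, which is self-adjoint. Thus $\nabla P(p)=H(\nabla u(p))-\Delta u(p)\,\nabla u(p)$ and $\nabla Z(p)=H(K(p))$, so the whole statement reduces to a two-dimensional linear-algebra identity for the symmetric operator $H$.

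Next I would set up the frame. By construction $K(p)=\gamma'(0)=v$ is a unit vector orthogonal to $\nabla u(p)$ (here one normalizes $v$ so that $\gamma$ has unit speed). Hence, writing $a:=|\nabla u(p)|>0$, the vectors $e_1:=\nabla u(p)/a$ and $e_2:=K(p)$ form an orthonormal basis of $T_p\Omega$. Let $\alpha=\langle He_1,e_1\rangle$, $\beta=\langle He_1,e_2\rangle=\langle He_2,e_1\rangle$ and $\delta=\langle He_2,e_2\rangle$ be the entries of $H$ in this basis, so that $\Delta u(p)=\operatorname{tr}H=\alpha+\delta$.

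Then the computation is immediate. From $\nabla u(p)=a\,e_1$ one gets $H(\nabla u(p))=a(\alpha e_1+\beta e_2)$, whence $\nabla P(p)=a(\alpha e_1+\beta e_2)-(\alpha+\delta)a\,e_1=a(-\delta\,e_1+\beta\,e_2)$; and from $K(p)=e_2$ one gets $\nabla Z(p)=\beta e_1+\delta e_2$. Taking the inner product gives $\langle\nabla P(p),\nabla Z(p)\rangle=a(-\delta\beta+\beta\delta)=0$, while $|\nabla P(p)|^2=a^2(\delta^2+\beta^2)=a^2|\nabla Z(p)|^2=|\nabla u(p)|^2\,|\nabla Z(p)|^2$, which are exactly the two claimed identities.

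There is essentially no serious obstacle here; the only point requiring care is the bookkeeping that makes $\{e_1,e_2\}$ orthonormal, namely that $K(p)$ is genuinely a unit vector orthogonal to $\nabla u(p)$. Orthogonality is built into the choice of $v$ and was already used in Lemma \ref{commuting}; the unit-length normalization of $v$ is what produces the clean factor $|\nabla u(p)|$ (rather than $|\nabla u(p)|\,|K(p)|$) in the first identity, and should be stated explicitly. Everything else follows from the self-adjointness of the Hessian together with the cancellation produced by subtracting the trace term $\Delta u\,\nabla u$.
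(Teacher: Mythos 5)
Your proposal is correct and follows essentially the same route as the paper: both work in the orthonormal frame $\bigl(K(p),\nabla u(p)/|\nabla u(p)|\bigr)$, feed \eqref{DF-normal} and Lemma \ref{DZ} into it, and exploit the symmetry of the Hessian together with $\Delta u=\operatorname{tr}D^2u$ to see that $\nabla P(p)$ is $|\nabla u(p)|$ times a $90^\circ$ rotation of $\nabla Z(p)$. Your matrix bookkeeping with $\alpha,\beta,\delta$ is just a notational variant of the paper's frame-component expansion, and your remark about normalizing $K(p)$ to unit length matches the paper's parenthetical check that $|K|=1$ along $\gamma$.
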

\begin{proof}
By construction, $\left(K,\frac{\nabla u}{|\nabla u|}\right)$ forms a orthonormal frame at $T_pM$ (one can check that $|K|=1$ along $\gamma$, and in particular, at $p$). Through the rest of the proof we will suppress the explicit dependence on $p$, since we will just look at $T_pM$. We use Lemma \ref{nablaZ} and deduce that, at $p$
\begin{equation}\label{DZ1}
\nabla Z=\langle \nabla_K \nabla u,K\rangle K+\langle \nabla_K\nabla u,\frac{\nabla u}{|\nabla u|}\rangle\frac{\nabla u}{|\nabla u|}.
\end{equation}
On the other hand, by \eqref{DF-normal} we have that, at $p$
\begin{equation}\label{DP}
\nabla P=\langle \nabla_{\nabla u}\nabla u-\Delta u\nabla u,K\rangle K+\left(\langle \nabla_{\nabla u}\nabla u,\frac{\nabla u}{|\nabla u|}\rangle-\Delta u|\nabla u|\right)\frac{\nabla u}{|\nabla u|}.
\end{equation}
However, observing that $\langle \nabla u,K\rangle=0$ at $p$, and that $\langle \nabla_{\nabla u}\nabla u,\frac{\nabla u}{|\nabla u|}\rangle-\Delta u|\nabla u|=-|\nabla u|\langle\nabla_K\nabla u,K\rangle$ (we use the fact that the Laplacian is the trace of the Hessian), we can rewrite \eqref{DP} as
\begin{multline}\label{DP1}
\nabla P=\langle \nabla_{\nabla u}\nabla u,K\rangle K-|\nabla u|\langle\nabla_K\nabla u,K\rangle\frac{\nabla u}{|\nabla u|}\\
=|\nabla u|\left(\langle \nabla_K\nabla u,\frac{\nabla u}{|\nabla u|}\rangle K-\langle\nabla_K\nabla u,K\rangle\frac{\nabla u}{|\nabla u|}\right).
\end{multline}
The conclusion follows from \eqref{DZ1} and \eqref{DP1}.

\end{proof}

We have defined, for any $p\in\Omega$ with $\nabla u(p)\ne 0$, a function $Z$ such that $-\Delta Z=f'(u)Z$ in $\Omega$ and $Z(p)=0$. Note that the Killing vector field used to define $Z$ in \eqref{Z1} depends on $p$. We will use the function $Z$ to prove that under some geometric conditions on $\Omega$, $P$ and $u$ have the same critical points. This is contained in the next proposition. A key ingredient in its proof is that, under certain geometric conditions, $Z$ has exactly two zeros on $\partial\Omega$. The proof of this last fact is postponed to Subsection \ref{gcon}.

\begin{prop}\label{samezeros}
Let $\Omega\subset M$ be a bounded and smooth domain. Assume that
\begin{enumerate}[i)]
\item $\Omega$ is convex with boundary of positive curvature if $M=\mathbb R^2$;
\item $\Omega$ is convex with boundary of positive curvature and diameter smaller than $\frac{\pi}{2}$ if $M=\mathbb S^2$;
\item $\Omega$ is horoconvex if $M=\mathbb H^2$.
\end{enumerate}
Then $p$ is a zero of $\nabla P$ if and only if it is a zero of $\nabla u$.
\end{prop}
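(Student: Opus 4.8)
The plan is to prove the two implications separately, the forward one being immediate and the reverse one by contradiction. If $\nabla u(p)=0$, then formula \eqref{DF-normal} shows $\nabla P(p)=\nabla_{\nabla u}\nabla u|_p-\Delta u(p)\,\nabla u(p)=0$, since both summands carry a factor that vanishes at $p$. So the content is the converse, which I would attack by assuming, for contradiction, that $\nabla P(p)=0$ while $\nabla u(p)\neq 0$. Under this assumption I can form the auxiliary function $Z=K(u)$ of \eqref{Z1}, where $K$ is the Killing field whose integral line through $p$ is the geodesic orthogonal to $\nabla u(p)$. By Lemma~\ref{commuting}, $Z$ is a nontrivial solution of $-\Delta Z=f'(u)Z$ with $Z(p)=0$, and by Lemma~\ref{nablaZ} we have $|\nabla P(p)|=|\nabla u(p)|\,|\nabla Z(p)|$. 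As $\nabla P(p)=0$ and $\nabla u(p)\neq 0$, this forces $\nabla Z(p)=0$; together with $Z(p)=0$, the point $p$ is an \emph{interior singular point} of $Z$, so by Corollary~\ref{cornod} the nodal set $Z^{-1}(0)$ is, near $p$, the union of at least two curves crossing transversally, i.e.\ at least four nodal arcs emanate from $p$.

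The heart of the argument is a topological count on the nodal set $\mathcal N=Z^{-1}(0)$, viewed as a graph embedded in the topological disk $\overline\Omega$ (here convexity enters, ensuring $\Omega$ is simply connected). Its interior vertices are the singular points of $Z$, each of degree at least four by Corollary~\ref{cornod}, while its only boundary vertices are the zeros of $Z$ on $\partial\Omega$. By Proposition~\ref{geom} there are exactly two such boundary zeros $q_1,q_2$; moreover, since $\partial u/\partial\nu\neq 0$ on $\partial\Omega$ by the Hopf lemma, on the boundary $Z=\langle K,\nu\rangle\,\partial u/\partial\nu$ vanishes exactly where $K$ is tangent to $\partial\Omega$, and these zeros are simple, so $q_1,q_2$ are leaves (degree one) of the graph. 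Now $\mathcal N$ cannot be a forest: writing $m\ge 1$ for the number of interior vertices, a nodal forest with $V=m+2$ vertices, $E$ edges and $r\ge 1$ components satisfies $E=V-r$, whence $2E=2(m+2-r)$, while the degree sum is at least $4m+2$; this gives $2(m+2-r)\ge 4m+2$, i.e.\ $m\le 1-r\le 0$, contradicting $m\ge 1$. (An isolated nodal loop carrying no singular vertex is itself a cycle and is handled directly.) Hence $\mathcal N$ contains a cycle, and since the leaves $q_1,q_2$ cannot lie on it, this cycle is a simple closed curve $C$ contained in the interior of $\Omega$, bounding a region $D$ with $\overline D\subset\subset\Omega$.

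It remains to convert this interior cycle into an eigenvalue contradiction. On $D$ we have $Z\in H^1_0(D)$ because $Z=0$ on $\partial D=C\subset\mathcal N$, and $Z\not\equiv 0$ on $D$ by unique continuation for the elliptic equation $-\Delta Z=f'(u)Z$; moreover $\mathcal L Z=-\Delta Z-f'(u)Z=0$ in $D$. Thus $0$ is a Dirichlet eigenvalue of the stability operator $\mathcal L$ on $D$, so $\lambda_1(\mathcal L,D)\le 0$. On the other hand $\Omega\setminus\overline D$ has nonempty interior, so strict domain monotonicity of the first Dirichlet eigenvalue, together with the semi-stability hypothesis $\lambda_1(\mathcal L,\Omega)\ge 0$, yields $\lambda_1(\mathcal L,D)>\lambda_1(\mathcal L,\Omega)\ge 0$. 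These two facts are incompatible, which refutes the assumption $\nabla u(p)\neq 0$ and establishes the proposition.

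I expect the main obstacle to lie in the topological step of the middle paragraph: one must justify the local nodal picture carefully, namely the degree bound at interior singular points (from Corollary~\ref{cornod}) and, crucially, the simplicity of the two boundary zeros and their transversality to $\partial\Omega$, which is exactly what Proposition~\ref{geom} and the geometric hypotheses $i)$--$iii)$ must supply; one must also account for the degenerate possibility of a nodal loop meeting no singular vertex. Once the existence of an interior cycle is secured, the passage to a compactly contained subdomain and the eigenvalue argument are routine.
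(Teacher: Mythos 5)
Your proposal is correct and takes essentially the same route as the paper's proof: the same auxiliary function $Z=K(u)$, the same use of Lemmas \ref{commuting} and \ref{nablaZ} and Corollary \ref{cornod} to make $p$ an interior singular point of $Z$, the same appeal to Proposition \ref{geom} for the two boundary zeros, and the same conclusion via a nodal loop bounding $\omega\subset\subset\Omega$ and domain monotonicity of the first eigenvalue of $\mathcal L=-\Delta-f'(u)$ against semi-stability. The only difference is that you spell out, with the graph-theoretic (forest/Euler) count, the step the paper compresses into ``the set $Z=0$ creates a loop,'' which is a more explicit rendering of the same argument rather than a different one.
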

\begin{proof}
It is straightforward to check that if $p\in\Omega$ is such that $\nabla u(p)=0$, then also $\nabla P(p)=0$. On the other hand, assume that $p\in\Omega$ is a zero of $\nabla P$, but $\nabla u(p)\ne 0$. Therefore we can define a function $Z$ as in \eqref{Z1}. It follows by Lemma \ref{commuting} that $Z$ does not vanish identically, $Z(p)=0$ and $-\Delta Z=f'(u)Z$ and by Lemma \ref{nablaZ} that $\nabla Z(p)=0$, since $\nabla P(p)=0$. Hence by Corollary \ref{cornod} we deduce that in a neighborhood of $p$, $v^{-1}(0)$ is given by (at least) two curves which intersect transversally.  Moreover, hypotheses $i),ii)$ and $iii)$ and Proposition \ref{geom} imply that $Z$ has exactly two zeros on $\partial\Omega$. This implies  that the set $Z=0$ creates a loop, i.e., there exists an open set $\omega\subset\subset\Omega$ such that $Z=0$ on $\partial\Omega$ and $Z$ does not change sign in $\omega$, and moreover solves  $-\Delta Z=f'(u)Z$ in $\omega$. Hence, by domain monotonicity, the first eigenvalue of the operator $-\Delta-f'(u)$ in $\Om$ is negative and this is a contradiction with the semi-stability of the solution $u$.
\end{proof}

\subsection{The critical points of $u$ are non-degenerate}\label{sub_W}
An argument analogous to the one of the previous subsection allows to prove that the critical points of $u$ are non-degenerate.

Suppose that $u$ is such that $\nabla u(p)=0$, and let $v\in T_pM$ be such that, at $p$, $D^2u(v,w)=0$ for all $w\in T_pM$, that is, $p$ is a degenerate critical point.

As for the definition of $Z$, let $K$ be a Killing vector field such that the geodesic $\gamma$ with $\gamma(0)=p$, $\gamma'(0)=v$ is an integral curve of $K$. Then we define
\begin{equation}\label{W1}
W:=K(u).
\end{equation}
We have the analogous of Lemma \ref{commuting}.
\begin{lem}\label{commuting2}
The function $W$ does not vanish identically and satisfies
$$
-\Delta W=f'(u)W
$$
in $\Omega$. Moreover, $W(p)=0$.
\end{lem}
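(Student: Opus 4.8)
The plan is to follow the proof of Lemma~\ref{commuting} essentially verbatim: $W=K(u)$ has exactly the same form as $Z$, the only difference being that the direction $v$ used to construct the Killing field $K$ is now a degeneracy direction of the Hessian at the critical point $p$, rather than a direction orthogonal to $\nabla u$. Each of the three assertions then transfers with at most a cosmetic change.

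First I would establish the identity $-\Delta W=f'(u)W$. This uses only that $K$ is a Killing vector field and hence commutes with the Laplacian. Indeed,
\[
\Delta W=\Delta K(u)=K(\Delta u)=K(-f(u))=-f'(u)\,K(u)=-f'(u)W,
\]
which is the same computation as for $Z$ in Lemma~\ref{commuting} and requires nothing new.

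The assertion $W(p)=0$ is in fact immediate, and even simpler than the corresponding step for $Z$: since $p$ is now assumed to be a critical point of $u$, we have $\nabla u(p)=0$, whence $W(p)=K(u)(p)=\langle K,\nabla u\rangle(p)=0$ directly, with no appeal to orthogonality needed.

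For the non-vanishing I would reproduce the geodesic argument. If $W\equiv0$ in $\Omega$, then $W\equiv0$ on $\partial\Omega$; since $u=0$ there, $\nabla u$ is parallel to the conormal $\nu$ on $\partial\Omega$ and, by the Hopf boundary lemma applied to the positive solution $u$, it does not vanish. Hence $0=W=\langle K,\nabla u\rangle$ on $\partial\Omega$ forces $K$ to be tangent to $\partial\Omega$ at every boundary point, so that $\partial\Omega$ is an integral curve of $K$. But the geodesic $\gamma$ through the interior point $p$ is also an integral curve of $K$ and meets $\partial\Omega$, which is impossible since two distinct integral curves of a vector field cannot intersect. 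This contradiction yields $W\not\equiv0$. I do not anticipate any genuine obstacle here: the statement is a direct transcription of Lemma~\ref{commuting}, and the only point worth flagging is the use of the Hopf lemma (together with $f(0)\ge0$) to ensure $\nabla u\ne0$ on $\partial\Omega$, which is precisely what makes the vanishing of $\langle K,\nabla u\rangle$ on the boundary equivalent to the tangency of $K$.
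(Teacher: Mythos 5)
Your proposal is correct and follows the paper's own route essentially verbatim: the paper proves Lemma~\ref{commuting2} by citing the commutation $\Delta K(u)=K(\Delta u)$ and the non-vanishing argument from Lemma~\ref{commuting}, and notes that $W(p)=0$ is trivial since $\nabla u(p)=0$, exactly as you do. Your only additions (spelling out the Hopf-lemma justification that $\nabla u\ne 0$ on $\partial\Omega$, and the uniqueness of integral curves) are harmless elaborations of steps the paper leaves implicit.
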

The fact that $W$ does not vanish identically is straightforward to check (see the proof of Lemma \ref{commuting}). The fact that $W(p)=0$ is trivial, in fact by hypothesis $p$ is a critical point of $u$.
\begin{lem}\label{nablaW}
Let $p\in\Omega$ be a degenerate critical point. Then
$$
\nabla W(p)=0.
$$
\end{lem}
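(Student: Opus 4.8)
The plan is to follow the same computation as in the proof of Lemma \ref{DZ}, noting that the vanishing of $\nabla u$ at $p$ makes the argument even simpler in the present situation. Writing $W = K(u) = \langle \nabla u, K\rangle$, I would first differentiate to obtain, for every vector field $X$,
$$
\langle \nabla W, X\rangle = \langle \nabla_X \nabla u, K\rangle + \langle \nabla u, \nabla_X K\rangle,
$$
equivalently $\nabla W = \nabla_K \nabla u + \nabla_{\nabla u} K$ after using the symmetry of the Hessian together with the Killing equation \eqref{Keq}, exactly as in the proof of Lemma \ref{DZ}.

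Next I would evaluate this identity at $p$. Here lies the simplification with respect to Lemma \ref{DZ}: since $p$ is a critical point of $u$, we have $\nabla u(p) = 0$, so the term $\nabla_{\nabla u} K|_p = \nabla_0 K = 0$ vanishes immediately, without invoking the orthogonal-frame argument (via the Killing equation) that was needed when $\nabla u(p) \neq 0$. Thus $\nabla W(p) = \nabla_K \nabla u|_p$.

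Finally I would use that $\gamma$ is an integral curve of $K$ with $\gamma(0) = p$ and $\gamma'(0) = v$, so that $K(p) = v$. Hence for every $X \in T_p M$,
$$
\langle \nabla W(p), X\rangle = \langle \nabla_v \nabla u|_p, X\rangle = D^2 u(v, X),
$$
which is exactly the Hessian of $u$ evaluated on the degenerate direction $v$. By the defining property of $v$, namely $D^2 u(v, w) = 0$ for all $w \in T_p M$, the right-hand side vanishes for every $X$, and therefore $\nabla W(p) = 0$.

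There is essentially no serious obstacle here: the statement is a direct consequence of the identity $\nabla W = \nabla_K \nabla u + \nabla_{\nabla u} K$ together with $\nabla u(p) = 0$ and the degeneracy hypothesis. The only points requiring a little care are the bookkeeping in the first display (symmetry of the Hessian and the antisymmetry of $\nabla K$ from the Killing equation, both already carried out in Lemma \ref{DZ}) and the identification $\langle \nabla_v \nabla u, X\rangle = D^2 u(v, X)$, i.e., recognizing $\nabla_K \nabla u|_p$ as the Hessian acting on $v$. The degeneracy assumption is then used in precisely the form in which it was stated.
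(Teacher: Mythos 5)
Your proof is correct and follows essentially the same route as the paper: the paper likewise invokes the identity $\nabla W(p)=\nabla_K\nabla u|_p$ from Lemma \ref{DZ} and concludes via $\langle\nabla W,X\rangle=D^2u(K,X)=0$ using the degeneracy of $v=K(p)$. If anything, you are slightly more careful than the paper, since you observe that with $\nabla u(p)=0$ the term $\nabla_{\nabla u}K|_p$ vanishes trivially by tensoriality, whereas the paper simply cites Lemma \ref{DZ}, whose proof of that vanishing used the orthogonal-frame argument valid when $\nabla u(p)\neq 0$.
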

\begin{proof}
 Let $W$ be defined as in \eqref{W1}. Hence, at $p$, $D^2u(K,X)=0$ for all $X\in T_pM$. From Lemma \ref{DZ} (with $Z$ replaced by $W$) we have than, for all $X\in T_pM$:
$$
\langle\nabla W,X\rangle=\langle\nabla_K\nabla u,X\rangle=D^2u(K,X)=0.
$$
\end{proof}

\begin{prop}\label{nodeg}
Let $\Omega\subset M$ be a bounded and smooth domain. Assume that
\begin{enumerate}[i)]
\item $\Omega$ is convex with boundary of positive curvature if $M=\mathbb R^2$;
\item $\Omega$ is convex with boundary of positive curvature and diameter smaller than $\frac{\pi}{2}$ if $M=\mathbb S^2$;
\item $\Omega$ is horoconvex if $M=\mathbb H^2$.
\end{enumerate}
Then the zeros of $\nabla u$ are non-degenerate critical points of $u$.
\end{prop}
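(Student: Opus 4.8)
The plan is to argue by contradiction, running the same scheme as in the proof of Proposition \ref{samezeros}, but now with the auxiliary function $W$ in place of $Z$. Suppose that $p\in\Omega$ is a \emph{degenerate} critical point of $u$. Then there exists $v\in T_pM$, $v\ne 0$, with $D^2u(v,w)=0$ for every $w\in T_pM$, and this is precisely the direction used to build $W$ in \eqref{W1}. By Lemma \ref{commuting2} the function $W=K(u)$ does not vanish identically, solves $-\Delta W=f'(u)W$ in $\Omega$, and satisfies $W(p)=0$; moreover, by Lemma \ref{nablaW} the degeneracy of $p$ forces $\nabla W(p)=0$. Hence $p$ is an interior singular point of the nontrivial solution $W$ of a second order elliptic equation.

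Next I would invoke Corollary \ref{cornod}: in a neighborhood of $p$ the nodal set $W^{-1}(0)$ consists of (at least) two curves meeting transversally at $p$, so that at least four nodal branches emanate from $p$. On the other hand, since $W$ is again of the form $K(u)$ with $K$ the Killing field whose integral curve through $p$ is the geodesic in direction $v$, Proposition \ref{geom} applies under hypotheses $i)$, $ii)$, $iii)$ and yields that $W$ has exactly two zeros on $\partial\Omega$.

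These two facts are topologically incompatible unless the nodal set closes up: the at least four branches issuing from $p$ cannot all terminate on $\partial\Omega$, which carries only two zeros of $W$, so at least one nodal arc must return, enclosing an open set $\omega\subset\subset\Omega$ with $W=0$ on $\partial\omega$ and $W$ of constant sign in $\omega$. Then $W$ is, up to sign, a nonnegative Dirichlet eigenfunction of the stability operator $\mathcal L=-\Delta-f'(u)$ on $\omega$ with eigenvalue $0$, and being one-signed it must be a first eigenfunction, so the first Dirichlet eigenvalue of $\mathcal L$ on $\omega$ equals $0$. Since $\omega\subset\subset\Omega$ is compactly contained, strict domain monotonicity of the first eigenvalue gives that the first eigenvalue of $\mathcal L$ on $\Omega$ is strictly negative, contradicting the semi-stability of $u$. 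Therefore no degenerate critical point exists, and every zero of $\nabla u$ is non-degenerate.

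I expect that there is no genuinely new obstacle here: the only input that differs from Proposition \ref{samezeros} is Lemma \ref{nablaW}, which supplies $\nabla W(p)=0$ from the vanishing of the Hessian in the direction $v$ (rather than from $\nabla P(p)=0$ as for $Z$). The one point I would check with care is that Proposition \ref{geom} genuinely applies to $W$, and this holds because its proof uses only that $W=K(u)$ for a Killing field $K$ whose integral curve through $p$ is a geodesic; the particular choice of $v$ (a degenerate direction of $D^2u$ rather than a direction orthogonal to $\nabla u$) plays no role in the count of boundary zeros. Once this is granted, the transversal-crossing structure from Corollary \ref{cornod}, the two-boundary-zeros count from Proposition \ref{geom}, and the loop/eigenvalue contradiction are identical to the earlier argument.
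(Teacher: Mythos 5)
Your proposal is correct and follows essentially the same route as the paper's proof: contradiction via the auxiliary function $W$ of \eqref{W1}, using Lemma \ref{commuting2}, Lemma \ref{nablaW}, Corollary \ref{cornod}, and Proposition \ref{geom} to produce a nodal loop of $W$ and hence a contradiction with semi-stability, exactly as in Proposition \ref{samezeros}. In fact you spell out the steps the paper compresses into ``as in Proposition \ref{samezeros}'' (the branch-counting argument and the domain-monotonicity conclusion), and your check that Proposition \ref{geom} applies to $W$ regardless of how the direction $v$ is chosen matches the paper's implicit use of that fact.
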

\begin{proof}
Assume that $p\in\Omega$ is a degenerate critical point of $u$. Then we can define a function $W$ as in \eqref{W1}. It follows by Lemma \ref{commuting2} that $W$ does not vanish identically, $W(p)=0$ and $
-\Delta W=f'(u)W$, and by Lemma \ref{nablaW} that $\nabla W(p)=0$. Moreover, hypotheses $i),ii)$ and $iii)$ and Proposition \ref{geom} imply that $W$ has exactly two zeros on $\partial\Omega$. As in Proposition \ref{samezeros}, this implies that there exists an open set $\omega\subset\subset\Omega$ such that $W=0$ on $\partial\omega$ and does not change sign on $\omega$, and moreover solves  $-\Delta W=f'(u)W$ in $\omega$. We conclude as in Proposition \ref{samezeros}.
\end{proof}

\subsection{Application of Poincaré-Hopf Theorem and conclusion of the proof}\label{sub_conclusion}

In order to conclude, we want to apply the Poincaré-Hopf Theorem to the vector field $\nabla P$.

In order to do so we need to compute ${\rm Ind}_{p}\nabla P$ for all critical points $p$ of $P$ and  the sign of $\langle\nabla P,\nu\rangle$ at $\partial\Omega$. On the other hand, since we are considering convex domains $\Omega$ in $M$, we always have $\chi(\Omega)=1$. 

\begin{lem}
Let $\nu$ be the outer unit normal to $\partial\Omega$. Then
\begin{equation}\label{bdry_curv}
\langle \nabla P,\nu\rangle=-|\nabla u|^2\kappa
\end{equation}
where $\kappa$ is the geodesic curvature of $\partial\Omega$ (with respect to the the orientation given by $\nu$). 
\end{lem}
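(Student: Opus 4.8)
The plan is to work entirely on $\partial\Omega$ and to exploit the Dirichlet condition $u=0$ together with the expression \eqref{DF-normal} for $\nabla P$; no model-space structure is needed, the computation being local and intrinsic on any Riemannian surface. The first observation is that, since $u$ is positive in $\Omega$ and vanishes on $\partial\Omega$, its gradient is normal to the boundary: writing $u_\nu=\langle\nabla u,\nu\rangle$, one has $\nabla u=u_\nu\,\nu$ on $\partial\Omega$, with $u_\nu<0$ and $|\nabla u|^2=u_\nu^2$. Pairing \eqref{DF-normal} with $\nu$ then gives
$$\langle\nabla P,\nu\rangle=\langle\nabla_{\nabla u}\nabla u,\nu\rangle-\Delta u\,\langle\nabla u,\nu\rangle=u_\nu\,D^2u(\nu,\nu)-u_\nu\,\Delta u=u_\nu\bigl(D^2u(\nu,\nu)-\Delta u\bigr),$$
where I used $\langle\nabla_{\nabla u}\nabla u,\nu\rangle=u_\nu\langle\nabla_\nu\nabla u,\nu\rangle=u_\nu D^2u(\nu,\nu)$. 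Thus the whole statement reduces to identifying $D^2u(\nu,\nu)-\Delta u$ with $-\kappa\,u_\nu$.

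Next I would compute $\Delta u$ on $\partial\Omega$ using that the Laplacian is the trace of the Hessian, which may be evaluated in the orthonormal frame $\{\nu,\tau\}$ adapted to the boundary, with $\tau$ a unit tangent to $\partial\Omega$: $\Delta u=D^2u(\nu,\nu)+D^2u(\tau,\tau)$. The tangential term is where the boundary geometry enters. Since $u\equiv0$ along $\partial\Omega$, the tangential derivative $\tau(u)$ vanishes identically on $\partial\Omega$, hence so does $\tau(\tau(u))$; from the formula $D^2u(\tau,\tau)=\tau(\tau(u))-(\nabla_\tau\tau)(u)$ we are left with $D^2u(\tau,\tau)=-(\nabla_\tau\tau)(u)$. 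Writing the geodesic curvature vector as $\nabla_\tau\tau=-\kappa\,\nu$, we obtain $D^2u(\tau,\tau)=\kappa\,u_\nu$.

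Combining the two computations yields $D^2u(\nu,\nu)-\Delta u=-D^2u(\tau,\tau)=-\kappa\,u_\nu$, and substituting back into the first display gives
$$\langle\nabla P,\nu\rangle=u_\nu\cdot(-\kappa\,u_\nu)=-\kappa\,u_\nu^2=-|\nabla u|^2\kappa,$$
which is \eqref{bdry_curv}. I expect the only delicate point to be fixing the sign of $\kappa$ consistently: one must pin down, once and for all, the relation $\nabla_\tau\tau=-\kappa\,\nu$ between the geodesic curvature vector and the \emph{outer} normal, with the convention that $\kappa>0$ for a convex boundary so that the curvature vector points inward. This is easiest to verify on a Euclidean circle of radius $R$, where $\nabla_\tau\tau$ points toward the center and $\kappa=1/R>0$, and one must then make sure the same orientation is used in the subsequent Poincaré--Hopf application. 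Everything else is a routine local computation.
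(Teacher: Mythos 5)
Your proof is correct and follows essentially the same route as the paper's: both pair \eqref{DF-normal} with $\nu$, use that $\nabla u$ is normal to $\partial\Omega$ (since $u=0$ there), and reduce $\Delta u$ on the boundary to $\partial^2_{\nu\nu}u+\kappa\,\partial_\nu u$ because the tangential part vanishes. The only difference is cosmetic: you derive this boundary decomposition from $D^2u(\tau,\tau)=\tau(\tau(u))-(\nabla_\tau\tau)(u)$ and $\nabla_\tau\tau=-\kappa\nu$, whereas the paper invokes it as the well-known identity $\Delta u|_{\partial\Omega}=\Delta_{\partial\Omega}u+\kappa\partial_\nu u+\partial^2_{\nu\nu}u$; your version has the minor virtue of making the sign convention for $\kappa$ explicit.
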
 
\begin{proof}
Recall that at $\partial\Omega$, $\nu=-\frac{\nabla u}{|\nabla u|}$ (we assume $u>0$ in $\Omega$, but nothing essentially changes if we take $u<0$), and that we can write, for a vector field $X$, $\langle \nabla_{\nabla u}\nabla u,X\rangle=D^2u(\nabla u, X)$, where $D^2u$ is the Hessian of $u$.
Hence
\begin{multline*}
\langle \nabla P,\nu\rangle=-\langle \nabla_{\nabla u}\nabla u-\Delta u\nabla u,\frac{\nabla u}{|\nabla u|}\rangle=-|\nabla u|D^2u\left(\frac{\nabla u}{|\nabla u|},\frac{\nabla u}{|\nabla u|}\right)+\Delta u|\nabla u|\\
=|\nabla u|(-\partial^2_{\nu\nu}u+\Delta u)=|\nabla u|(-\partial^2_{\nu\nu}u+\Delta_{\partial\Omega} u+\kappa\partial_{\nu}u+\partial^2_{\nu\nu}u)=-|\nabla u|^2\kappa,
\end{multline*}
where $\kappa$ is the geodesic curvature of the boundary with respect to the the orientation given by $\nu$. Here we have used the well-known decomposition $\Delta u|_{\partial\Omega}=\Delta_{\partial\Omega}u+\kappa\partial_{\nu}u+\partial^2_{\nu\nu}u$, where $\Delta_{\partial\Omega}$ denotes the Laplacian on $\partial\Omega$, $\partial_{\nu}u=\langle\nabla u,\nu\rangle$ and $\partial^2_{\nu\nu}u=D^2u(\nu,\nu)$. Since $u=0$ on $\partial\Omega$, $\Delta_{\partial\Omega}u=0$.
\end{proof}

Now we compute the index of the critical points of $P$.

\begin{lem}\label{index_crit}
Let $p\in\Omega$ be such that $\nabla P(p)=0$. Then ${\rm Ind}_p \nabla P=1$.
\end{lem}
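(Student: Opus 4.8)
The plan is to reduce $\mathrm{Ind}_p\nabla P$ to the sign of the determinant of the Hessian of $P$ at $p$, and then to show that this determinant is a perfect square, hence strictly positive. By Proposition \ref{samezeros} and Proposition \ref{nodeg}, a zero $p$ of $\nabla P$ is automatically a \emph{non-degenerate} critical point of $u$, so $\nabla u(p)=0$ and the Hessian endomorphism $S:=D^2u(p)$ (defined by $SX=\nabla_X\nabla u|_p$) is an invertible, self-adjoint operator on $T_pM$, with real eigenvalues $\lambda_1,\lambda_2$ and $\det S=\lambda_1\lambda_2\neq 0$. Since $\nabla P$ is a gradient field and $p$ is a non-degenerate zero, the final remark of the Poincaré--Hopf subsection gives $\mathrm{Ind}_p\nabla P={\rm sign}\,{\rm det}\,D^2P(p)$: indeed, in any chart the coordinate Jacobian of $\nabla P$ at $p$ equals the Riemannian Hessian $D^2P(p)$ up to the factor $\det(g^{ij})>0$, because all Christoffel and metric-derivative corrections are multiplied by $\nabla P(p)=0$. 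Thus it suffices to prove $\det D^2P(p)>0$.

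Next I would compute $D^2P(p)$ by differentiating the identity \eqref{DF-normal}, namely $\nabla P=\nabla_{\nabla u}\nabla u-\Delta u\,\nabla u$, along an arbitrary $Y\in T_pM$, using crucially that $\nabla u(p)=0$: every term carrying an undifferentiated factor $\nabla u$ drops out. Writing $\nabla_{\nabla u}\nabla u=S(\nabla u)$ for the field $S$, one gets $\nabla_Y\bigl(S(\nabla u)\bigr)\big|_p=(\nabla_Y S)(\nabla u)|_p+S(\nabla_Y\nabla u)|_p=S(SY)=S^2Y$, while $\nabla_Y(\Delta u\,\nabla u)\big|_p=\Delta u(p)\,\nabla_Y\nabla u|_p=-f(u(p))\,SY$, using $\Delta u(p)=-f(u(p))$. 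Hence the Hessian endomorphism of $P$ at $p$ is
$$
D^2P(p)=S^2+f(u(p))\,S=S\bigl(S+f(u(p))\,\mathrm{Id}\bigr).
$$

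Finally I would invoke the equation once more through the trace of $S$. Since $\lambda_1+\lambda_2=\mathrm{tr}\,S=\Delta u(p)=-f(u(p))$, we have $f(u(p))=-(\lambda_1+\lambda_2)$, and therefore
$$
\det D^2P(p)=\det S\cdot\det\bigl(S+f(u(p))\,\mathrm{Id}\bigr)=\lambda_1\lambda_2\,\bigl(\lambda_1+f(u(p))\bigr)\bigl(\lambda_2+f(u(p))\bigr).
$$
Substituting $f(u(p))=-(\lambda_1+\lambda_2)$ gives $\bigl(\lambda_1+f(u(p))\bigr)\bigl(\lambda_2+f(u(p))\bigr)=(-\lambda_2)(-\lambda_1)=\lambda_1\lambda_2$, so that $\det D^2P(p)=(\lambda_1\lambda_2)^2=(\det S)^2>0$. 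Consequently $\mathrm{Ind}_p\nabla P=+1$, which is the claim; geometrically, this shows that $p$ is a local extremum (never a saddle) of $P$.

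I do not expect a deep obstacle here, as the final identity is essentially forced. The only point requiring care is the differentiation in the second step: one must verify that all covariant-derivative, Christoffel, and curvature corrections are attached to a factor $\nabla u$ vanishing at $p$, so that they play no role. Evaluating everything at the critical point $p$ (where $\nabla u(p)=0$), together with the fact that $D^2u$ at a critical point is a genuine tensor, makes these corrections disappear and reduces the argument to the clean algebraic computation above.
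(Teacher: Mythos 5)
Your proof is correct and follows essentially the same route as the paper: differentiate \eqref{DF-normal} at $p$ (where $\nabla u(p)=0$ by Proposition \ref{samezeros}), obtain $D^2P(p)=D^2u(p)\bigl(D^2u(p)-\Delta u(p)\,\mathrm{Id}\bigr)$, and conclude $\det D^2P(p)=(\det D^2u(p))^2>0$ by Proposition \ref{nodeg}, so the index is $+1$. Your eigenvalue substitution $f(u(p))=-(\lambda_1+\lambda_2)$ is just the spectral form of the paper's $2\times 2$ identity $\det\bigl((A-\mathrm{Tr}(A)I)A\bigr)=(\det A)^2$, and your explicit check that the coordinate Jacobian of $\nabla P$ has the same determinant sign as the Riemannian Hessian at a critical point fills in a detail the paper leaves implicit.
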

\begin{proof}
 We consider $D^2P:=\nabla \nabla P$ (is the iterated covariant derivative). Consider any system of local coordinates $(x_1,x_2)$ in a neighborhood of $p$. Since $\nabla P(p)=0$, at $p$ we can write $D^2P$ in coordinates as
$$
D^2P=(D^2u(p)-\Delta u(p)I)D^2u(p)
$$
where  $I$ is th $2\times 2$ identity matrix and
$$
D^2u(p)=(\partial^2_{x_ix_j}u)_{ij}
$$
It is now immediate to check that ${\rm det}D^2P=({\rm det}D^2u(p))^2>0$ since $p$ is a non  degenerate critical point for $u$ (see Proposition \ref{nodeg}). This follows from the fact that for any $2\times 2$ matrix $A$  we have ${\rm det}((A-{\rm Tr}(A) I)A)={\rm det}(A-{\rm Tr}(A) I){\rm det}(A)=({\rm det}(A))^2$.  This concludes the proof.

\end{proof}

We can now conclude the proof of Theorem \ref{main}. Let $p_i$ denote the critical points of $P$. From Lemma \ref{bdry_curv} we deduce that $\langle\nabla P,\nu\rangle<0$ on $\partial\Omega$ under hypotheses $i)$,$ii)$, $iii)$,  and also by the fact that $|\nabla u|>0$ on $\partial\Omega$. This last fact follows by Hopf's Lemma which applies to $u$ since we have assumed that the nonlinearity $f$ satisfies $f(0)\geq 0$. Then by Theorem \ref{PH} we have
$$
\sum_{p_i}{\rm Ind}_{p_i}\nabla P=\chi(\Omega)=1
$$
since $\Omega$ is convex.  From Lemma \ref{index_crit} we deduce that ${\rm Ind}_{p_i}\nabla P=1$ for all $p_i$. Therefore $\nabla P$ has only one critical point $p$. From Proposition \ref{samezeros} we deduce that $p$ is the unique critical point of $u$, and from Proposition \ref{nodeg} we deduce that it is non-degenerate. The proof of Theorem \ref{main} is concluded.

\subsection{Geometric conditions on the convex sets}\label{gcon}

In this subsection we prove that under conditions $i)$-$iii)$ of Theorem \ref{main} the functions $Z$ and $W$ defined respectively in \eqref{Z1} and \eqref{W1} have exactly two zeros on $\partial\Omega$.

We first recall a few concepts of convexity in the hyperbolic plane $\mathbb H^2$.

\begin{defn}
A horocycle is a continuous curve in $\mathbb H^2$ whose normal geodesics all converge asymptotically in the same direction. Horocycles have constant geodesic curvature $\kappa=1$.
\end{defn}

\begin{defn}\label{horoconvex}
We say that a domain $\Omega\subset\mathbb H^2$ is horoconvex if at every point $p\in\partial\Omega$ there exists a horocycle passing through $p$ such that $\Omega$ is contained in the region bounded by the horocycle.
\end{defn}

In the Poincaré disk model, which is the model of Hyperbolic geometry that we are considering in this article, horocycles are Euclidean circles entirely contained in $D$ and tangent to $\partial D$ .

\begin{prop}\label{geom}
Let $\Omega\subset M$ be a bounded and smooth domain, let $u:\Omega\to\mathbb R$ be such that $u=0$ on $\partial\Omega$, $\nabla u\ne 0$ on $\partial\Omega$. Let $p\in\Omega$, $v\in T_pM$, $\gamma$ a geodesic with $\gamma(0)=p$, $\gamma'(0)=v$, and let $K$ be a Killing vector field such that $\gamma$ is an integral curve of $K$. Assume moreover that
\begin{enumerate}[i)]
\item $\Omega$ is convex with boundary of positive curvature if $M=\mathbb R^2$;
\item $\Omega$ is is convex with boundary of positive curvature and diameter smaller than $\frac{\pi}{2}$ if $M=\mathbb S^2$;
\item $\Omega$ is horoconvex if $M=\mathbb H^2$.
\end{enumerate}
Then the function $F:=K(u)$ has exactly two zeros on $\partial\Omega$.
\end{prop}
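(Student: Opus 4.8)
The plan is to reduce the statement to a purely geometric fact about how the integral curves of $K$ meet $\partial\Omega$. On $\partial\Omega$ we have $u=0$, so $\nabla u$ is normal to the boundary; since $u>0$ inside, $\nabla u=-|\nabla u|\nu$ where $\nu$ is the outer unit normal. Hence $F=K(u)=\langle\nabla u,K\rangle=-|\nabla u|\langle K,\nu\rangle$ on $\partial\Omega$, and because $|\nabla u|\neq 0$ there by hypothesis, the zeros of $F$ on $\partial\Omega$ coincide with the zeros of $g:=\langle K,\nu\rangle$, i.e.\ with the points where $K$ is \emph{tangent} to $\partial\Omega$. By Examples \ref{euex}, \ref{exsph}, \ref{exhyp} the integral curves of $K$ are exactly the curves equidistant from the axis $\gamma$ (lines parallel to $\gamma$ in $\mathbb{R}^2$, circles of latitude about $\gamma$ in $\mathbb{S}^2$, hypercycles in $\mathbb{H}^2$), and these are the level sets of the signed distance function $r:=\operatorname{dist}(\cdot,\gamma)$. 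Since $K$ is tangent to these level sets, a short computation in the tangent plane shows that $K\perp\nu$ if and only if $\nabla r\parallel\nu$; thus the zeros of $F$ on $\partial\Omega$ are precisely the \emph{critical points of} $r|_{\partial\Omega}$. (In the spherical case the hypothesis $\operatorname{diam}\Omega<\tfrac\pi2$ enters already here: since $\gamma$ meets $\Omega$, every point of $\bar\Omega$ lies within distance $<\tfrac\pi2$ of $\gamma$, so the two fixed points of the rotation $K$, which sit at distance $\tfrac\pi2$ from $\gamma$, stay off $\bar\Omega$; hence $K\neq0$ on $\bar\Omega$ and $r$ is smooth there.)

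For the lower bound, I would integrate: since $K$ is Killing it is divergence-free, so by the divergence theorem $\int_{\partial\Omega}\langle K,\nu\rangle\,ds=\int_\Omega\operatorname{div}K\,dA=0$. Moreover $g\not\equiv0$, for otherwise $\partial\Omega$ would be a single integral curve of $K$, i.e.\ a level set $\{r=c_0\}$; but the geodesic $\gamma=\{r=0\}$ passes through the interior point $p$ and cannot be contained in the bounded set $\Omega$, so $\gamma\cap\partial\Omega\neq\emptyset$, forcing $c_0=0$ and the absurd conclusion $\partial\Omega\subset\gamma$. Thus $g$ has vanishing mean and is not identically zero, so it changes sign and has at least two zeros. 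It therefore remains to prove that $r|_{\partial\Omega}$ has \emph{exactly} two critical points, equivalently a single maximum and a single minimum.

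The upper bound I would obtain from the sharper claim that \emph{every level curve $\{r=c\}$ meets $\partial\Omega$ in at most two points}. Granting this, each sublevel set $\{q\in\partial\Omega:r(q)<c\}$ is an open subset of the circle $\partial\Omega$ whose boundary $\{r=c\}\cap\partial\Omega$ has at most two points, hence it is a single arc; connectedness of all sublevel (and, symmetrically, superlevel) sets forces a unique local minimum and a unique local maximum of $r|_{\partial\Omega}$, giving exactly two critical points. To prove the at-most-two-intersections claim I would argue by contradiction through a Gauss--Bonnet comparison: if $\{r=c\}$ crossed $\partial\Omega$ in three consecutive points, two of the resulting lens-shaped regions would be bounded by an arc of $\partial\Omega$ and an arc of the equidistant curve, and applying Gauss--Bonnet to such a lens yields a contradiction as soon as the geodesic curvature of $\partial\Omega$ \emph{strictly dominates} that of the equidistant curve along the configuration. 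The geodesic curvatures of the equidistant curves are: $0$ in $\mathbb{R}^2$, $\tanh(\operatorname{dist})\in(-1,1)$ in $\mathbb{H}^2$, and $\tan(\operatorname{dist})$ in $\mathbb{S}^2$. Thus in $\mathbb{R}^2$ the positive boundary curvature $\kappa>0$ suffices, and in $\mathbb{H}^2$ horoconvexity (Definition \ref{horoconvex}) gives $\kappa_{\partial\Omega}\geq1>|\tanh(\operatorname{dist})|$, so the comparison closes cleanly in these two cases.

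The \textbf{main obstacle is the spherical case}, precisely because $\tan(\operatorname{dist})$ is \emph{unbounded} as $\operatorname{dist}\to\tfrac\pi2$: for a domain that is merely convex (so $\kappa_{\partial\Omega}$ may be small) but reaches far from $\gamma$, the naive curvature comparison $\kappa_{\partial\Omega}>|\kappa_{\{r=c\}}|$ can fail, which is exactly why pure convexity is not enough (cf.\ the remark after Theorem \ref{main}). Here the hypothesis $\operatorname{diam}\Omega<\tfrac\pi2$ must be used in an essential way beyond merely removing the fixed points of $K$: it confines $\Omega$ to an open hemisphere, and I would exploit this by passing to the central (gnomonic) projection, which sends geodesics to straight lines and hence the geodesically convex $\Omega$ to a convex planar region while mapping the latitude circles to controlled conics, thereby reducing the tangency count to a planar comparison. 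Making this reduction quantitative — so that the diameter bound genuinely forces at most two tangencies for every latitude circle — is the delicate point on which the spherical part of the proof rests, and is the step I expect to require the most care.
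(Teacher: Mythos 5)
Your first step (on $\partial\Omega$ one has $F=-|\nabla u|\langle K,\nu\rangle$, so the zeros of $F$ are exactly the points where an integral curve of $K$ is tangent to $\partial\Omega$) is precisely how the paper's proof of Proposition \ref{geom} begins, and your treatment of $\mathbb R^2$ is correct; the divergence-theorem lower bound is pleasant but not needed, since the maximum and minimum of the distance to the axis already give two tangencies. The genuine gap is case ii), and you flag it yourself: you never prove that at most two latitude circles are tangent to $\partial\Omega$, you only predict that the gnomonic projection should reduce this to a ``delicate planar comparison''. That reduction is exactly what the paper carries out, and the idea you are missing is that no quantitative comparison is needed at all. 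Projecting from the center of $\mathbb S^2$ onto the tangent plane at $p$ (legitimate because $\operatorname{diam}\Omega<\tfrac{\pi}{2}$ puts $\overline\Omega$ in the open hemisphere centered at $p$), geodesics become straight lines, so the projected domain is again strictly convex and contains the origin, while the latitude circles about the axis $\gamma$ become the branches of the hyperbolas $H_a=\{y^2-\tfrac{1-a^2}{a^2}x^2+1=0\}$, $a\in(0,1)$, together with the $y$-axis for $a=0$. Two purely convexity-theoretic observations then finish the count in each half-plane $\{x>0\}$, $\{x<0\}$: distinct branches $H_{a_1}$, $H_{a_2}$ on the same side are disjoint and nested, so a line separating the strictly convex projected domain from the inner branch prevents tangency to both; and tangency to a single branch at two distinct points $q_1\neq q_2$ is impossible, since the projected domain would then contain the segment $[q_1,q_2]$ and, because it also contains the origin, the arc of $H_a$ between $q_1$ and $q_2$, contradicting tangency. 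Without this argument, or a substitute for it, your proposal does not prove the spherical case, which is exactly the case where the hypothesis $\operatorname{diam}\Omega<\tfrac{\pi}{2}$ does all the work.

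A second, less visible, problem is that your hyperbolic step does not ``close cleanly'' as claimed. In $\mathbb H^2$, Gauss--Bonnet applied to a lens $L$ bounded by an arc of $\partial\Omega$ and an arc of an equidistant curve at distance $d$ reads, schematically, $-|L|+\int_{\partial L}\kappa_g\,ds+\theta_1+\theta_2=2\pi$ with exterior angles $\theta_i<\pi$, and the equidistant arc can contribute total turning up to $\tanh(d)\,\ell_C$, where its length $\ell_C$ can be of order $\cosh(d)$ times the distance between its endpoints. Hence the pointwise domination $\kappa_{\partial\Omega}\geq 1>\tanh(d)$ does not by itself produce a contradiction: the turning of the equidistant arc can exceed that of the competing boundary arc as soon as $\tanh(d)\cosh(d)=\sinh(d)>1$, and one would then have to bring the area term into play. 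The paper sidesteps this entirely by never counting crossings: it bounds tangency points directly, using horoconvexity as a separation principle --- at each putative tangency point there is a horodisk containing $\Omega$ and tangent there to the equidistant circle $C_a$, and two tangencies (whether on the same $C_a$ or on two nested circles $C_{a_1},C_{a_2}$) are impossible because one tangency point would be forced outside the horodisk attached to the other, contradicting that it lies in $\overline\Omega$. If you want to salvage your outline, replace the Gauss--Bonnet comparison by this kind of support-disk argument; it plays, in the hyperbolic case, the role of the trivial fact that a strictly convex planar curve has exactly two tangent lines in each fixed direction.
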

\begin{proof}
We start with the simple case $i)$. We can assume without loss of generality that $p=(0,0)$ and that $v=(1,0)$, so that $K=\partial_x$ (here we are using Cartesian coordinates $(x,y)$ in $\mathbb R^2$). 

Since $\nabla u\ne 0$ on $\partial\Omega$, we have that $\nu=-\frac{\nabla u}{|\nabla u|}$   is the outer unit normal to $\partial\Omega$ (assuming $u>0$). Hence the condition $K(u)=0$ at $s\in\partial\Omega$ translates on the geometric condition that at $s$ the integral curve of $K$ is tangent to $\partial\Omega$. Now, the integral curves of $K$ are lines parallel to the $x$-axis. Therefore only two such lines are tangent to $\partial\Omega$, being $\Omega$ strictly convex by hypothesis $i)$.

We pass to the proof of $ii)$. The assumption on the diameter of $\Omega$ implies that $\Omega$ is strictly contained in a hemisphere centered at $p$, for any $p\in\Omega$. Let $p$ be as in the statement. It is convenient to think of $\mathbb S^2$ as embedded in $\mathbb R^3$:  $\mathbb S^2\subset\mathbb R^3$, $\mathbb S^2:=\{(x,y,z)\in\mathbb R^3:x^2+y^2+z^2=1\}$. Without loss of generality, we can assume that $p=(0,0,-1)$ is the south pole, and hence that $\Omega$ is contained in the lower hemisphere $\mathbb S^2_-:=\mathbb S^2\cap\{z<0\}$. The geodesic $\gamma$ is a great circle passing through $p$, and hence $K$ is a field whose integral lines are spherical circles centered at two antipodal points on the equator $\mathbb S^2\cap\{z=0\}$. In view of Example \ref{exsph}, we can assume without loss of generality that $K=\sin(\phi)\partial_{\theta}+\cot(\theta)\cos(\phi)\partial_{\phi}$.

Now, consider the central projection $\Pi$ of $\Omega$ and of the level curves of $K$ on the plane $\pi:\{z=-1\}$.  Recall that, for $s\in\mathbb S^2_-$, $\Pi(s)$ is the intersection of the line through the origin and $s$ with the plane $\pi$.  We identify in a natural way points $(x,y,-1)\in\pi$ with points $(x,y)\in\mathbb R^2$ and hence $\pi$ with $\mathbb R^2$. If $\Omega\subset\mathbb S^2_-$ is strictly convex, then also $\Pi(\Omega)$ is strictly convex in $\mathbb R^2$ and moreover it contains the origin $\Pi(p)$. Finally, the central projections of the integral curves of $K$ foliate $\mathbb R^2$ by hyperbolas of equation $y^2-\frac{1-a^2}{a^2}x^2+1=0$, $a\in[0,1)$. When $a=0$ we have the projection of the arc of great circle $\mathbb S^2_+\cap\{x=0\}$ which is just the $y$-axis of equation $x=0$ in $\mathbb R^2$.

Our problem is then re-formulated in the following terms: let $\Omega\subset\mathbb R^2$ be a bounded and strictly convex domain of $\mathbb R^2$ containing the origin. Then only two branches of hyperbolas $H_a:=\{(x,y):y^2-\frac{1-a^2}{a^2}x^2+1=0\}$, $a\in(0,1)$ are tangent to $\partial\Omega$, each one in exactly one point.

 Consider first $\Omega\cap \{x>0\}$. Since $0\in\Omega$, we have that there exists at least one hyperbola $H_{a_1}$ to which $\Omega$ is tangent. Suppose by contradiction that they are at least two: $H_{a_1},H_{a_2}$.

 Observe that if $a_1\ne a_2$ then $H_{a_1}$ and $H_{a_2}$ do not intersect. Moreover, by the strict convexity of $\Om$ and the interior of $H_{a_1}$, there is one straight line $r_1$ which separates $\Om$ and $H_{a_1}$. Hence, if $a_1<a_2$ the hyperbola $H_{a_2}$ lies on the right of $H_{a_1}$ and so cannot intersect $\Om$. Of course this gives that there is no hyperbola $H_{a_2}$ tangent to $\Om$ for $a_1<a_2$. In the same way we see that if $a_2<a_1$ then $H_{a_1}$ does not intersect $\Omega$. A contradiction.

It remains to consider the case $a_1=a_2=:a$. Assume that $\Omega$ is tangent to two distinct points on the same hyperbola $H_a$, say $q_1,q_2$. Then $\Omega$ contains the whole segment $[q_1,q_2]$, and since the origin belongs to $\Omega$, it contains also the arc of $H_a$ between $q_1$ and $q_2$, and hence $\Omega$ is not tangent to $H_a$ at $q_1,q_2$. A contradiction.

We conclude that for $x>0$ only one hyperbola $H_a$ is tangent to $\Omega$ at exactly one point. The same is true for $x<0$.

We conclude with $iii)$. Let $\Omega,p$ be as in the statement. Without loss of generality we can assume that $p=(0,0)$ in the Poincaré disk model, and that $K=-\frac{1-x^2+y^2}{2}\partial_x+xy\partial_y$. It is not difficult to see that the level curves of $K$ are given by $C_a:=\{x^2+(y-a)^2=1+a^2\}\cap D$, $a\in\mathbb R\setminus\{0\}$.

Recall that here $D$ is the unit disk in $\mathbb R^2$. The level curves $C_a$ are the portions of circles centered at $(0,a)$ of radius $\sqrt{1+a^2}>1$ contained in $D$. On the other hand, $\Omega$ is a horoconvex domain containing the origin. This means that it is contained in a horodisk (a disk bounded by a horocycle) tangent at any of its points. This implies that $\Omega$ is Euclidean convex with boundary curvature $\geq 1$. Now we proceed similarly to $ii)$ and prove that for $y>0$ we have only one $C_a$, $a<0$, tangent to $\Omega$. Clearly, we have at least one. Now, if we have two points of tangency $q_1,q_2$ on the same $C_a$, then $\Omega$ is contained, and tangent, to two horocycles which are also tangent to $C_a$ at $q_1,q_2$. This would imply that $q_1$ does not belong to the horocycle tangent at $q_2$, so it belongs to the complement of $\overline\Omega$ (and, in the same way, $q_2$ does not belong to the horocycle tangent at $q_1$), and this is not possible. If the two tangency points belong to two different $C_a$, say $C_{a_1},C_{a_2}$, assume that $|a_2|>|a_1|$. Then $\Omega$ is supported and tangent to a horocycle contained entirely in the disk centered at $(0,a_2)$ of radius $\sqrt{1+a_2^2}$. Hence $\Omega$ cannot be tangent to $C_{a_1}$. A contradiction. This concludes the proof.
\end{proof}

We conclude with a final remark.
\begin{rem}
It is natural to ask if the previous ideas could be applied to more general surfaces. Although some results can be applied, it does not seem possible to have immediate generalizations. Let us try to describe a possible strategy for closed surfaces in the case of the first non-trivial eigenfunction (i.e., $f(s)=\lambda s$, $\lambda$ the first non-trivial eigenvalue). 

Let $(M,g)$ be a closed Riemannian surface with the property that for any $p\in M$ and $v\in T_pM$ there exists a (global) Killing vector field such that the geodesic $\gamma$ with $\gamma(0)=p$, $\gamma'(0)=v$ is an integral curve of $K$ (a necessary condition is that there exist at least two Killing vector fields which are linearly independent at each point of $M$). Then the strategy of the proof of Theorem \ref{main} works in this case in a more straightforward way and applies to the second eigenfunction of the Laplacian on $M$: let $u$ be a solution of $-\Delta u=\lambda u$, where $\lambda>0$ is the second eigenvalue (the first is zero). Let us define a function $P$ as in \eqref{PF} and let $Z$ be defined as in \eqref{Z1}. Then $Z$ is a second eigenfunction of the Laplacian on $M$ if it is not identically $0$. If $p$ is a zero of $P$ but not of $\nabla u$, we prove as in Subsection \ref{sub_Z} that $Z$ and its gradient vanish at $p$. This implies that at $p$ we have two nodal lines of $Z$ meeting transversally, and therefore, that $Z$ has at least three nodal domains. A contradiction. Hence critical points of $u$ are critical points of $P$ and vice-versa. Note that here we did not have to check the vanishing of $Z$ at two points of the boundary, being $M$ boundaryless. In the same way, arguing as in Subsection \ref{sub_W}, we conclude that the critical points of $u$ are non-degenerate. In this situation, we can conclude as in Subsection \ref{sub_conclusion}:  for example, in the orientable genus $0$ case we have $\chi(M)=2$ and hence $P$, and therefore $u$, have exactly two non-degenerate critical points. Necessarily, the critical points of $u$  are a maximum and a minimum.

Unfortunately this approach fails at some points:
\begin{itemize}
\item There are few surfaces with two global Killing vector fields and in those cases  the second eigenfunctions are known explicitly (round sphere, flat torus, real projective plane, etc., see e.g., \cite{myers}).
\item Even if two global Killing vector fields are available, it is not clear if the function $Z$ which we define in \eqref{Z1}  is not identically zero (this applies also to the function $W$ in \eqref{W1}). This may depend on the eigenfunction $u$ and the point $p$ defining $Z$ (recall that $Z$ depends on $u$ and $p$, see Subsection \ref{sub_Z}). For example it happens for the round sphere if, in polar coordinates $(\theta,\phi)$, the eigenfunction is $u=\cot(\theta)$ and $p=(\pi/2,\phi_0)$. In this case  the Killing vector field defining $Z$ is $K=\frac{\partial}{\partial\phi}$ and therefore $Z=\partial_{\phi}u\equiv 0$. However we know  that $u$ has two critical points. It may also happen that $Z\equiv 0$ and $u$ has no isolated critical points but a one dimensional set of critical points. This happens, for example, in the case of the flat torus.
\end{itemize}
\end{rem}

\appendix

\section{On the eigenvalue problem for manifolds of revolution}\label{revolution}
In this Appendix we collect a few information on the critical points of solutions to \eqref{DL} when $f(s)=\lambda s$ and $\lambda$ is the first (non-trivial) eigenvalue of the Laplacian.  We limit ourselves to considering the case of manifolds of revolution, with or without boundary of any dimension $n$. In particular, we will end our analysis with a conjecture for closed manifolds.

We emphasize that rotationally invariant metrics are somehow special, and for such metrics it is easier to obtain results also in higher dimensions. To this regard, we mention \cite[Theorem 2]{cc}, where the authors prove that for strictly convex domains of revolution around an axis in $\mathbb R^n$, a positive semi-stable solution to a semi-linear elliptic equation admits a unique non-degenerate critical point which is a maximum.

We recall that a simply connected $n$-dimensional Riemannian manifold $(M,g)$ with a distinguished point $x_0$ is called a {\it revolution manifold with pole $x_0$} if $M\setminus\{x_0\}$ is isometric to $(0,D)\times\mathbb S^{n-1}$  and its metric, in polar coordinates $(r,t)\in(0,D)\times\mathbb S^{n-1}$ based at $x_0$, is written as $g=dr^2+\Theta^2(r)g_{\mathbb S^{n-1}}$, $\Theta(0)=0$, $\Theta>0$ in $(0,D)$. We can assume that $\Theta'(0)=1$. We call $D$ the diameter of $M$. The density of the Riemannian metric on $M$ in polar coordinates is given by $\sqrt{{\rm det}g}=\Theta^{n-1}(r)$.

  For space forms of constant curvature $K=1,0,-1$ we have
$$
\Theta(r)=\begin{cases}
\sin(r)\,, & K=1,\\
r\,, & K=0,\\
\sinh(r)\,, & K=-1.
\end{cases}
$$
We recall that when $n=2$ the quantity $-\frac{\Theta''}{\Theta}$ is the Gaussian curvature of $M$. In general, $-\frac{\Theta''}{\Theta}$ is the sectional curvature $\mathcal K(v_i,v_n)$, $i=1,...,n-1$, where $v_n=\partial_r$ and $v_i$ are the coordinate fields on $\mathbb S^{n-1}$.

\subsection{Manifolds of revolution with boundary}

We consider here the case of a Riemannian manifold of revolution with connected boundary $\partial M$, and $u$ the first eigenfunction of the Dirichlet Laplacian on $M$:
\begin{equation}\label{rev_D}
\begin{cases}
-\Delta u=\lambda u\,, & {\rm in\ }M,\\
u=0\,, & {\rm on\ }\partial M,
\end{cases}
\end{equation}
where $\lambda>0$ is the first eigenvalue. We have that the critical point is unique, non-degenerate, and it is a maximum if $u$ is chosen positive, without any assumptions on the rotationally invariant metric $g$.

\begin{prop}\label{rev_bdry}
Let $(M,g)$ be a simply connected manifold of revolution with  boundary. Then the first Dirichlet eigenfunction on $M$ has a unique critical point. If $u$ is positive, it is a maximum.
\end{prop}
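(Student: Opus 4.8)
The plan is to exploit the rotational symmetry of $M$ to reduce the whole question to a one-dimensional ODE analysis. Since $M$ is a manifold of revolution with connected boundary, it is a geodesic ball centered at the pole $x_0$, with $\partial M=\{r=D\}$ a single orbit sphere; consequently every rotation of the $\mathbb S^{n-1}$ factor is an isometry of $M$ fixing $\partial M$. I would use the fact that the first Dirichlet eigenvalue $\lambda$ is simple and that its eigenfunction can be taken positive: for any such rotation $\rho$, the function $u\circ\rho$ is again a positive first eigenfunction, hence a positive multiple of $u$ with the same $L^2$ norm, so $u\circ\rho=u$. As this holds for all rotations, $u$ is rotationally invariant, i.e. $u=u(r)$ depends only on the geodesic distance $r$ from $x_0$.

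Next I would write the Laplace--Beltrami operator of $g=dr^2+\Theta^2(r)\,g_{\mathbb S^{n-1}}$ acting on a radial function and put the eigenvalue equation in divergence form with the natural integrating factor $\Theta^{n-1}$:
$$
(\Theta^{n-1}u')'=-\lambda\,\Theta^{n-1}u \qquad\text{on }(0,D),
$$
supplemented by $u(D)=0$ and the smoothness condition $u'(0)=0$ at the pole (the latter is forced by regularity of $u$ as a function on the smooth manifold $M$).

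The core of the argument is a sign analysis of $\Theta^{n-1}u'$. Being the first eigenfunction, $u$ does not change sign, so normalizing it positive gives $u>0$ on $[0,D)$; hence the right-hand side above is strictly negative and $\Theta^{n-1}u'$ is strictly decreasing on $(0,D)$. Near the pole $\Theta(r)\sim r$ and $u'(r)\to 0$, so $\Theta^{n-1}u'\to 0$ as $r\to 0^+$; starting from $0$ and strictly decreasing, $\Theta^{n-1}u'$ is therefore strictly negative throughout $(0,D)$. Since $\Theta^{n-1}>0$ there, I conclude that $u'(r)<0$ for every $r\in(0,D)$, i.e. $u$ is strictly decreasing in $r$.

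Finally I would translate this back to $M$. For a radial function $\nabla u=u'(r)\,\partial_r$, which can vanish on a level sphere $\{r=r_0\}$ only if $u'(r_0)=0$; by the previous step this never occurs for $r_0\in(0,D)$. The only critical point that remains is the pole $x_0$, where $\nabla u=0$ by symmetry and where $u$ attains its maximum. Thus $u$ has a unique critical point, which is a maximum. The main obstacle I anticipate is the reduction to radial symmetry: one must carefully invoke the simplicity of the first Dirichlet eigenvalue and verify that the connected boundary is exactly the orbit sphere $\{r=D\}$, so that the full rotation group acts by isometries preserving the problem. Once radial symmetry is secured, the ODE sign analysis is entirely elementary.
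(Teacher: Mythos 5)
Your proof is correct, and its second half coincides with the paper's argument: both write the radial eigenvalue equation in divergence form, observe that $(\Theta^{n-1}u')'=-\lambda\,\Theta^{n-1}u<0$ so that $\Theta^{n-1}u'$ is strictly decreasing from the value $0$ at the pole, and conclude $u'<0$ on $(0,D)$, hence a unique critical point (the maximum at the pole). Where you genuinely differ is in how radial symmetry of $u$ is established. The paper separates variables: it decomposes the whole Dirichlet spectrum of $M$ into the Sturm--Liouville families \eqref{SL1} indexed by the degree $l$ of spherical harmonics and identifies the first eigenfunction as the $k=1$, $l=0$ mode $u_{1,0}(r)$ (implicitly using that a first eigenfunction cannot change sign, while $H_l$ does for $l\geq 1$). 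You instead invoke simplicity of the first Dirichlet eigenvalue together with the fact that rotations of the $\mathbb S^{n-1}$ factor are isometries of the warped product fixing the pole and preserving $\partial M=\{r=D\}$, so that $u\circ\rho$ is again a positive first eigenfunction with the same $L^2$ norm, forcing $u\circ\rho=u$ for every rotation $\rho$. Both reductions are standard and valid; yours is more self-contained here, since it avoids justifying that the fundamental mode lies in the $l=0$ family, whereas the paper's decomposition is not wasted effort: essentially the same Sturm--Liouville machinery \eqref{SL2} is reused in Proposition \ref{rev_closed} for closed manifolds of revolution, where a pure symmetry argument would not suffice because the relevant second eigenfunctions there are genuinely non-radial. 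Two small points you should make explicit in a final write-up: simplicity of the first Dirichlet eigenvalue requires connectedness of $M$ (which holds, as $M$ is a simply connected manifold of revolution), and the rotations, a priori isometries only of $M\setminus\{x_0\}\cong(0,D)\times\mathbb S^{n-1}$, extend to isometries of all of $M$ across the pole.
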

\begin{proof}
We have that $D<+\infty$ and that $\partial M$ is homothetic to $\mathbb S^{n-1}$. 
 It is standard to see that the eigenfunctions of \eqref{DL} on $M$ are written in polar coordinates $(r,t)\in(0,D)\times\mathbb S^{n-1}$ as $u_{k,l}(r)H_l(t)$, for $l\in\mathbb N$, $k=1,2,...$, where $H_l(t)$ is some spherical harmonic of degree $l$ in $\mathbb S^{n-1}$. For each $l\in\mathbb N$, $u_{k,l}$ solves
\begin{equation}\label{SL1}
\begin{cases}
-u_{k,l}''(r)-(n-1)\frac{\Theta'(r)}{\Theta(r)}u_{k,l}'(r)+\frac{l(l+n-2)}{\Theta^2(r)}u_{k,l}(r)=\lambda_{k,l} u_{k,l}(r)\,, & r\in(0,D)\\
u_{k,0}'(0)=0{\rm\ if\ }l=0{\rm \ and\ }u_{k,l}(0)=0{\rm \ if\ }l\ne 0\\
u_{k,l}(D)=0.
\end{cases}
\end{equation}
For each fixed $l$, problem \eqref{SL1} admits an increasing sequence of positive eigenvalues $\{\lambda_{k,l}\}_k$ diverging to $+\infty$, and a corresponding orthonormal basis $\{u_{k,l}\}_k$ of $L^2((0,D),\Theta^{n-1}(r)dr)$ of eigenfunctions. The Dirichlet spectrum of $M$ is given by the union of the spectra of \eqref{SL1}, namely, by $\{\lambda_{k,l}\}_{k,l}$.

\medskip

In particular, denoting by $u,\lambda$ the first Dirichlet eigenfunction and eigenvalue of $M$, we have that
$$
u(r,t)=u_{1,0}(r)\,,\ \ \ \lambda=\lambda_{1,0}.
$$
In particular $u$ does not change sign on $(0,D)$ and solves
$$
-\frac{1}{\Theta^{n-1}}(\Theta^{n-1} u')'=\lambda u\ \ \ r\in(0,D)
$$
Assume that $u>0$. Then $(\Theta^{n-1} u')'<0$, which means that $\Theta^{n-1} u'$ is decreasing. Now, $\Theta^{n-1}(0)u'(0)=0$, which implies that $u'<0$, i.e., $u$ is strictly decreasing.

Therefore we conclude that $u$ has a unique critical point, which is a maximum. Moreover, $u$ is radially symmetric and strictly decreasing in the radial variable.
\end{proof}

\subsection{Closed manifolds of revolution}

In the case of a closed manifold of revolution diffeomorphic to $\mathbb S^n$, we consider an eigenfunction $u$ associated with the second eigenvalue $\lambda>0$ of the Laplacian on $M$, namely:
\begin{equation}\label{rev_C}
-\Delta u=\lambda u \ \ \ {\rm in\ }M.
\end{equation}
The first eigenvalue of the Laplacian on $M$ is $0$, with corresponding constant eigenfunctions. We are interested in the critical points of the second eigenfunction. The goal is to prove that it has exactly two non-degenerate critical points, which are a maximum and a minimum. We give  positive answers under certain conditions on the metric $g$, which, in dimension $n=2$ are equivalent to requiring that the Gaussian curvature of $M$ is positive. We denote by ${\rm mult}(\lambda)$ the multiplicity of the second eigenvalue. For manifolds of revolution diffeomorphic to the sphere we have ${\rm mult}(\lambda)\leq n+1$.

\begin{prop}\label{rev_closed}
Let $(M,g)$ be a closed manifold of revolution with metric  $g=dr^2+\Theta^2(r)g_{\mathbb S^{n-1}}$. Assume that $-\frac{\Theta''}{\Theta}>0$ in $M$. If ${\rm mult}(\lambda)\ne n+1$ then any eigenfunction of the Laplacian on $M$ associated with the second eigenvalue $\lambda$ has two non-degenerate critical points, a maximum and a minimum. If ${\rm mult}(\lambda)= n+1$, there exists a basis of a corresponding eigenspace of eigenfunctions with two non-degenerate critical points, a maximum and a minimum. 
\end{prop}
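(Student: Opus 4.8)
The plan is to reduce everything to the one-dimensional radial problems obtained by separation of variables, exactly as in the proof of Proposition \ref{rev_bdry}, and then to read off the critical points of the candidate second eigenfunctions. Since $M$ is closed and diffeomorphic to $\mathbb S^n$, the revolution structure has two poles, at $r=0$ and $r=D$, with $\Theta(0)=\Theta(D)=0$; consequently the eigenfunctions are of the form $u_{k,l}(r)H_l(t)$ with $H_l$ a spherical harmonic of degree $l$ on $\mathbb S^{n-1}$, where now the radial profile solves the analogue of \eqref{SL1} with the pole condition imposed at \emph{both} endpoints (Neumann $u'(0)=u'(D)=0$ when $l=0$, Dirichlet $u(0)=u(D)=0$ when $l\ge1$). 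First I would identify the second eigenvalue. Using that the first eigenvalue $\lambda_{1,0}=0$ is simple (constants), that $l\mapsto\lambda_{1,l}$ is increasing because the potential $l(l+n-2)/\Theta^2$ is increasing in $l$, and that $k\mapsto\lambda_{k,l}$ is increasing, one gets that the second eigenvalue is $\lambda=\min\{\lambda_{2,0},\lambda_{1,1}\}$. The eigenspace is then spanned by $u_{2,0}(r)$ when $\lambda_{2,0}<\lambda_{1,1}$ (multiplicity $1$), by $\{u_{1,1}(r)H_1(t)\}$ when $\lambda_{1,1}<\lambda_{2,0}$ (multiplicity $n$), and by both families when $\lambda_{2,0}=\lambda_{1,1}$ (multiplicity $n+1$); this is precisely the trichotomy in the statement.

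Next I would analyse the two radial profiles. For $u_{2,0}$, writing the equation in divergence form $(\Theta^{n-1}u')'=-\lambda\,\Theta^{n-1}u$ and using $\Theta^{n-1}(0)u'(0)=0$ shows, as in Proposition \ref{rev_bdry}, that $u_{2,0}$ is strictly monotone, with its only critical points at the two poles; non-degeneracy there is automatic, since evaluating $-\Delta u=\lambda u$ at a pole gives $u_{2,0}''=-\tfrac{\lambda}{n}u_{2,0}\neq0$, so the pole is a non-degenerate maximum (resp. minimum) at the end where $u_{2,0}>0$ (resp. $<0$). For $u_{1,1}$, the positive Dirichlet ground state of the $l=1$ branch, the crucial point is that it has a \emph{unique} interior critical point, and this is exactly where positive curvature enters. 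Since $-\Theta''/\Theta>0$ forces $\Theta''<0$, the warping function $\Theta$ is strictly concave, hence unimodal on $[0,D]$. Setting $A:=\Theta^{n-1}u_{1,1}'$ one computes $A'=\Theta^{n-3}\big((n-1)-\lambda\Theta^2\big)u_{1,1}$; since $u_{1,1}>0$ and $\Theta$ is unimodal, the factor $(n-1)-\lambda\Theta^2$ changes sign exactly twice, so $A$ increases, then decreases, then increases, and with $A(0)=A(D)=0$ this yields a single interior zero of $A$, i.e. a unique critical point $r^*$ of $u_{1,1}$, which is a maximum with $u_{1,1}''(r^*)<0$.

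With the radial information in hand I would compute the critical points on $M$. In the multiplicity-$1$ case the eigenfunction is $u_{2,0}(r)$ and the two poles are its only (non-degenerate) critical points, a maximum and a minimum. In the multiplicity-$n$ case a general eigenfunction is, after using that rotations of $\mathbb S^{n-1}$ are isometries of $M$, of the form $u_{1,1}(r)t_1$; the poles are not critical points (there $u_{1,1}(r)t_1$ is asymptotic to a non-zero linear function), while in the interior the gradient vanishes only where $u_{1,1}'(r)=0$ and $t$ is critical for $t_1$, i.e. at $(r^*,\pm e_1)$. At these points the Hessian is block diagonal (the mixed radial–spherical terms vanish because $u_{1,1}'(r^*)=0$), with radial entry $u_{1,1}''(r^*)t_1$ and spherical block $u_{1,1}(r^*)\,\mathrm{Hess}_{\mathbb S^{n-1}}t_1=-u_{1,1}(r^*)t_1\,g_{\mathbb S^{n-1}}$; evaluating at $t_1=\pm1$ gives a negative-definite Hessian at $(r^*,e_1)$ (a maximum) and a positive-definite one at $(r^*,-e_1)$ (a minimum). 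Finally, in the multiplicity-$(n+1)$ case I would simply exhibit the basis $\{u_{2,0}(r),\,u_{1,1}(r)t_1,\dots,u_{1,1}(r)t_n\}$: each of its elements has exactly two non-degenerate critical points by the two cases just treated, which is all that is claimed, a general linear combination mixing the two distinct radial profiles need not have this property, which is why only the existence of a good basis is asserted.

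The main obstacle is the analysis of $u_{1,1}$: proving that it has a single interior critical point. This fails for general warping functions and relies essentially on the unimodality of $\Theta$ coming from $\Theta''<0$; without positive curvature the sign factor $(n-1)-\lambda\Theta^2$ could oscillate and $u_{1,1}$ could acquire several critical points, producing extra critical points of $u_{1,1}(r)H_1(t)$. A secondary technical point is the comparison of the two boundary-value problems (Neumann for $l=0$ versus Dirichlet for $l=1$) entering the multiplicity trichotomy, but this is only needed to label the cases and not for the critical-point count itself.
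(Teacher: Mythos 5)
Your proposal is correct and follows essentially the same route as the paper: the same separation-of-variables trichotomy, the same monotonicity argument for the radial mode $u_{2,0}$, and the same key analysis of $N=\Theta^{n-1}u_{1,1}'$ (your $A$), using the concavity of $\Theta$ to limit the sign changes of $(n-1)-\lambda\Theta^2$ and the boundary values $N(0)=N(D)=0$ to force a unique interior critical point. You in fact go slightly further than the paper by spelling out the non-degeneracy explicitly (the value $u''=-\tfrac{\lambda}{n}u$ at the poles and the block-diagonal Hessian at $(r^*,\pm e_1)$), which the paper's proof leaves implicit.
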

\begin{proof}
We consider polar coordinates $(r,t)\in(0,D)\times\mathbb S^{n-1}$, where $D$ (the diameter) is the distance between $x_0$ (the pole) and its opposite $-x_0$. Moreover $\Theta(D)=0$. The  spectrum of $M$ is given by the union $\{\lambda_{k,l}\}_{k,l}$ of the spectra of the following Sturm-Liouville problems:

\begin{equation}\label{SL2}
\begin{cases}
-u_{k,l}''(r)-(n-1)\frac{\Theta'(r)}{\Theta(r)}u_{k,l}'(r)+\frac{l(l+n-2)}{\Theta^2(r)}u_{k,l}(r)=\lambda_{k,l} u_{k,l}(r)\,, & r\in(0,D)\\
u_{k,0}'(0)=u_{k,0}'(D)=0 &  {\rm or}\\
u_{k,l}(0)=u_{k,l}(D)=0 & {\rm if\ }l\ne 0.
\end{cases}
\end{equation}
The eigenfunctions are expressed by separation of variables as in the proof of Proposition \ref{rev_bdry}. In particular,  $\lambda_{1,0}=0$ is the first eigenvalue of $M$; a corresponding eigenfunction is $u_{1,0}(r)\equiv 1$. The second eigenvalue $\lambda$  is positive. Let $u$ be any function in the eigenspace associated with $\lambda$.

\medskip

Now one of the following things may happen:

\begin{enumerate}[1)]
\item $\lambda=\lambda_{2,0}$, $u(r,t)=u_{2,0}(r)$; the eigenvalue is simple and radial.
\item  $\lambda=\lambda_{1,1}$, $u(r,t)=u_{1,1}(r)H_1(t)$ for some spherical harmonic $H_1(t)$ of degree $1$;
the eigenvalue has multiplicity $n$.
\item $\lambda=\lambda_{2,0}=\lambda_{1,1}$, $u=a u_{2,0}(r)+b u_{1,1}(r)H_1(t)$ for some spherical harmonic $H_1(t)$ of degree $1$ and $a,b\in\mathbb R$; the eigenvalue has multiplicity $n+1$.
\end{enumerate}

At any rate, the second eigenfunction $u$ has two nodal domains (this is a consequence of standard Sturm-Liouville theory). We prove now the theorem by inspecting each different case.
\begin{enumerate}[1)]
\item If  $\lambda=\lambda_{2,0}$, then $u=u_{2,0}(r)$. In particular, $\lambda$ is the first Dirichlet eigenvalue for each of the two nodal domains $M_{\pm}$, which are manifolds of revolution with poles $\pm x_0$. Consequently, $u$ is obtained by joining in a proper way the two first Dirichlet eigenfunctions on $M_{\pm}$. In particular, $u$ has two critical points from Proposition \ref{rev_bdry}. We remark that, even if $u_{2,0}(r)$ is not a second eigenfunction of $M$, it is some eigenfunction which has always exactly two critical points, a maximum and a minimum.
\item If $\lambda=\lambda_{1,1}$, then $u(r,t)=u_{1,1}(r)H_1(t)$. A sufficient condition to ensure that $u$  has two critical points is that the first eigenfunction $u_{1,1}$ of the following Sturm-Liouville problem
\begin{equation}\label{SL3}
\begin{cases}
-u_{1,1}''-(n-1)\frac{\Theta'}{\Theta}u_{1,1}'+\frac{(n-1)u_{1,1}}{\Theta^2}=\lambda u_{1,1}\\
u_{1,1}(0)=u_{1,1}(D)=0
\end{cases}
\end{equation}
has only one critical point, which is a maximum (or a minimum).  We have that $u_{1,1}$ does not change sign and is strictly positive in $(0,D)$. By assumption we have $\Theta''<0$ on $(0,D)$. We define $N(r):=\Theta^{n-1}(r)u_{1,1}'(r)$. Since $\Theta''<0$, $\Theta>0$, and $\Theta(0)=\Theta(D)=0$, we have that there exists a unique $R\in(0,D)$ such that $\Theta'(R)=0$, while $\Theta'(r)>0$ in $(0,R)$ and $\Theta'(r)<0$ in $(R,D)$. We have, using the differential equation in \eqref{SL3}, that
$$
N'(r)=\left(\frac{n-1}{\Theta^2(r)}-\lambda\right)\Theta^{n-1}(r)u_{1,1}(r).
$$
Clearly $N'(r)>0$ for $r\in(0,\delta)$, for some $\delta>0$. Moreover $\Theta^{-2}(r)$ is decreasing for $r\in(0,R)$ and increasing in $(R,D)$. Hence $N'$ has at most two zeros in $(0,D)$. Recall that $N(0)=N(D)=0$. If $N'$ has no zeros, it is always positive, which means that $N>0$, and therefore $u_{1,1}'>0$, which is not possible, since $u_{1,1}$ vanishes at $r=0$ and $r=D$. For the same reason $N'$ cannot have only one zero, otherwise $N$ would still be positive in $(0,D)$. Then $N'$ has two zeros. This implies that that $u_{1,1}'$ vanishes only once in $(0,D)$, and therefore $u_{1,1}$ has a unique maximum. Hence the second eigenfunction $u$ on $M$ has exactly two critical points, a maximum and a minimum since it is given by $u_{1,1}(r)$ multiplied by $H_1(t)$.
\item A basis of the eigenspace corresponding to $\lambda$ is given by \\$\{u_{2,0}(r),u_{1,1}(r)H_1^1(t),...,u_{1,1}(r)H_1^n(t)\}$, where $\{H_1^1(t)$,...,$H_1^n(t)\}$ is any basis of the space of spherical harmonics of degree $1$ in $\mathbb S^{n-1}$.
\end{enumerate}
\end{proof}

The previous result, and the fact that in dimension $n=2$ the quantity $-\frac{\Theta''}{\Theta}$ is the Gaussian curvature of a manifold of revolution, motivates the following

\begin{conj}
Let $M$ be a closed Riemannian surface diffeomorphic to $\mathbb S^2$ of positive Gaussian curvature, and let $u$ be a second eigenfunction of the Laplacian on $M$. Is it true that $u$ has two non-degenerate critical points, a maximum and a minimum?
\end{conj}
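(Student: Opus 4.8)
The plan is to run the \emph{same} $P$-function / Poincar\'e--Hopf machinery that drives Theorem~\ref{main}, replacing the Killing-field input (which is unavailable here: a generic positively curved metric has no Killing fields, and when the second eigenvalue is simple the eigenspace is one-dimensional, so the auxiliary function $Z=K(u)$ of Subsection~\ref{sub_Z} cannot even be formed) by a direct convexity analysis of the level sets. Set $P:=\tfrac12|\nabla u|^2+\tfrac{\lambda}{2}u^2$ as in \eqref{PF} with $f(s)=\lambda s$. As in the body, every critical point of $u$ is a critical point of $P$, and by Lemma~\ref{index_crit} each \emph{non-degenerate} critical point of $u$ carries index $\mathrm{Ind}\,\nabla P=+1$. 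Since $\chi(\mathbb S^2)=2$, the whole statement reduces to the two facts (A) every zero of $\nabla P$ is a zero of $\nabla u$, and (B) every zero of $\nabla u$ is non-degenerate: granting these, Theorem~\ref{PH} applied to $\nabla P$ on the \emph{closed} surface gives $\sum_i \mathrm{Ind}_{p_i}\nabla P=\chi(\mathbb S^2)=2$, hence exactly two critical points, which---because $u$ changes sign and has two nodal domains---must be one maximum and one minimum.

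The key reduction is that (A) and (B) are both \emph{level-set convexity} statements. Fix $p$ with $\nabla u(p)\neq0$ and let $\{\nu_1,e\}$ be an orthonormal frame at $p$ with $\nu_1=\nabla u/|\nabla u|$. Decomposing \eqref{DF-normal} in this frame, the equation $\nabla P(p)=0$ is equivalent to
\begin{equation}\label{reduc}
D^2u(\nu_1,e)=0,\qquad D^2u(\nu_1,\nu_1)=\Delta u,
\end{equation}
and since the trace of $D^2u$ equals $\Delta u$, \eqref{reduc} forces $D^2u(e,e)=0$, so that the Hessian kills the level direction $e$ entirely. As the geodesic curvature of the level curve $\{u=u(p)\}$ is a nonzero multiple of $D^2u(e,e)$, condition \eqref{reduc} says exactly that the level curve through $p$ has an \emph{inflection point}. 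Thus (A) is equivalent to the strict convexity (nonvanishing geodesic curvature of fixed sign) of every regular level curve of $u$, and (B)---non-degeneracy of the extrema---follows once the small level curves surrounding each extremum remain strictly convex up to the critical value.

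To set the stage I would first pin down the nodal set. By Courant's nodal domain theorem $u$ has exactly two nodal domains $\Omega_\pm$. If $u$ and $\nabla u$ vanished simultaneously at a nodal point, then by Corollary~\ref{cornod} the nodal set would cross itself there and $u$ would have at least three nodal domains, a contradiction; hence $\nabla u\neq0$ on $\{u=0\}$, the nodal set is a smooth simple closed curve, and $\Omega_\pm$ are topological disks. On each $\Omega_\pm$, $u$ is (up to sign) the first Dirichlet eigenfunction, so it has no interior zeros and its level sets near the interior maximum (resp.\ minimum) are small strictly convex loops. The problem is now to \emph{propagate} this convexity across all of $\Omega_+$ (and $\Omega_-$).

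The heart of the matter---and the step I expect to be the genuine obstacle, consistent with the statement being left as an open question---is a concavity / constant-rank maximum principle for the geodesic curvature $\kappa_{\mathrm{lev}}$ of the level curves, in the spirit of the level-set techniques of Korevaar and Caffarelli--Friedman, of Brascamp--Lieb \cite{bl}, and of the spherical adaptation in \cite{lw}. One would derive, from $-\Delta u=\lambda u$ and the Bochner formula, an elliptic inequality for $\kappa_{\mathrm{lev}}$ (or for an auxiliary quantity such as $|\nabla u|\,\kappa_{\mathrm{lev}}$) in which the Gaussian curvature $K>0$ enters with a favorable sign, and then invoke the strong maximum principle to forbid an interior zero of $\kappa_{\mathrm{lev}}$ once it is positive near the extremum. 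The serious difficulty, absent in Theorem~\ref{main}, is the behaviour at the boundary of $\Omega_+$: here the ``boundary'' is the nodal curve, whose geodesic curvature is \emph{not} controlled by any hypothesis, so convexity cannot be seeded from $\partial\Omega_+$ as in the body and must instead be carried all the way from the interior maximum, with $\kappa_{\mathrm{lev}}$ a priori free to touch zero as one approaches the nodal line. Controlling this boundary behaviour---equivalently, ruling out inflection points of level curves accumulating at $\{u=0\}$---is the crux, and it is plausible that $K>0$ is precisely what is needed to close it, as the analogous Euclidean and spherical results suggest; a softer alternative would be a continuity argument along the normalized Ricci flow on $\mathbb S^2$ (which preserves $K>0$ and converges to the round metric), tracking critical points, but this must contend with eigenvalue crossings and the a priori creation of saddle--extremum pairs, and so seems no easier.
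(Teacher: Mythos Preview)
The paper does \emph{not} prove this statement: it is explicitly posed as an Open Question (the environment \texttt{conj} is declared as ``Open Question'' in the preamble), so there is no proof to compare against. Your write-up is not a proof either, but a strategy outline that honestly isolates where the argument breaks, which is appropriate.

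Your reduction is sound. The identity $\nabla P=\nabla_{\nabla u}\nabla u-\Delta u\,\nabla u$ indeed gives, at a point with $\nabla u\neq0$ and in the frame $\{\nu_1,e\}$, that $\nabla P=0$ forces $D^2u(\nu_1,e)=0$ and $D^2u(\nu_1,\nu_1)=\Delta u$, hence $D^2u(e,e)=0$. One small imprecision: you call (A) \emph{equivalent} to strict convexity of the level curves, but $\nabla P(p)=0$ requires both $D^2u(e,e)=0$ \emph{and} $D^2u(\nu_1,e)=0$, so an inflection point alone does not force $\nabla P=0$. What is true, and what you need, is the one-way implication: strict convexity of all regular level curves implies (A). Your use of Lemma~\ref{index_crit} and of Poincar\'e--Hopf on the closed surface (so $\sum\mathrm{Ind}=\chi(\mathbb S^2)=2$) is correct once (A) and (B) are granted, and the nodal-set discussion (smooth simple closed curve, two disk nodal domains) is standard.

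The genuine gap is exactly where you locate it: propagating strict convexity of the level curves of a first Dirichlet eigenfunction from a neighborhood of the interior extremum out to the nodal line, with no curvature control on that ``boundary''. The maximum-principle/constant-rank machinery you invoke (\cite{bl}, \cite{lw}, Korevaar-type arguments) is designed to push convexity \emph{inward} from a convex boundary, not outward from a point, and the nodal curve of a second eigenfunction on a generic positively curved sphere has no reason to be geodesically convex. Your Ricci-flow continuity idea runs into the eigenvalue-crossing issue you mention (and also the possibility that the second eigenvalue becomes multiple along the flow, where eigenfunctions can bifurcate). So as it stands the proposal is a clear and correct reformulation of the open question, not a proof; this is consistent with the paper's own position.
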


\bibliography{paperGPfin.bib}
\bibliographystyle{abbrv}

\end{document}